\definecolor{leaf}{rgb}{0,.35,0}
\definecolor{chianti}{rgb}{0.6,0,0}
\definecolor{meretale}{rgb}{0,0,.6}
\numberwithin{equation}{subsection}
\newcommand{\Hom}{\operatorname{Hom}}
\newcommand{\depth}{\operatorname{depth}}
\newcommand{\sD}{\operatorname{\mathsf{D}}}
\newcommand{\up}[1]{\textup{#1}}
\newcommand{\rb}{\mathsf{b}}
\newcommand{\rf}{\mathsf{f}}
\newcommand{\Dbf}{\sD_\rf^\rb}
\newcommand{\Db}{\sD_\rb}
\newcommand{\mx}{\mathfrak{m}}
\newcommand{\HH}{\up{H}}
\newcommand{\Z}{\up{Z}}
\newcommand{\B}{\up{B}}
\newcommand{\C}{\up{C}}
\newcommand{\Proj}{\mathsf{P}}
\newcommand{\Inj}{\mathsf{I}}
\newcommand{\Flat}{\mathsf{F}}
\newcommand{\GP}{\mathsf{GP}}
\newcommand{\GI}{\mathsf{GI}}
\newcommand{\GF}{\mathsf{GF}}
\newcommand{\Cl}{\mathsf{C}}
\newcommand{\id}{\operatorname{id}}
\newcommand{\pd}{\operatorname{pd}}
\newcommand{\fd}{\operatorname{fd}}
\newcommand{\Gpd}{\operatorname{Gpd}}
\newcommand{\Gid}{\operatorname{Gid}}
\newcommand{\Gfd}{\operatorname{Gfd}}
\newcommand{\Cld}{\operatorname{\Cl-dim}}
\newcommand{\Ext}{\operatorname{Ext}}
\newcommand{\level}[1]{\operatorname{level}_{R}^{#1}}
\newcommand{\thick}{\operatorname{thick}_R}
\newtheorem{theorem}{Theorem}[section]
\newtheorem{corollary}{Corollary}[theorem]
\newtheorem{lemma}{Lemma}[section]
\newtheorem*{varthmA}{Theorem A}
\newtheorem*{varthmB}{Theorem B}
\newtheorem*{varthmC}{Theorem C}
\newtheorem*{varthmD}{Theorem D}
\theoremstyle{definition}
\newtheorem{note}{}[section]
\newtheorem{defn}{Definition}[section]
\newtheorem{example}{Example}[section]
\title{Level Inequalities for Complexes}
\author{Zachary Nason}
\address{Department of Mathematics, University of Nebraska, Lincoln, NE 68588-0130, USA}
\email{znason2@huskers.unl.edu}
\begin{document}
\begin{abstract}
We prove that for all noetherian rings, the level of any complex $M \in \Db(R)$ with respect to the collection of projective or injective modules is bounded above by $\pd(\HH(M)^\oplus) + 1$ or $\id(\HH(M)^\oplus) + 1$, respectively. In addition, we also prove that if $\Cl$ is the collection of flat, Gorenstein projective, Gorenstein injective, or Gorenstein flat modules, then the level of any complex $M \in \Db(R)$ with respect to $\Cl$ is bounded above by $\max\{2, \Cld(\HH(M)^\oplus) + 1\}$. These results give universal bounds for the projective, injective, and flat levels over regular local rings, and give universal bounds for the Gorenstein projective, Gorenstein injective, and Gorenstein flat levels over Gorenstein local rings. As an application of the above results, we prove a version of the Bass Formula for complexes with respect to injective level and Gorenstein injective level. We also show that the bounds achieved for each homological and Gorenstein homological level considered is optimal.
\end{abstract}
\maketitle
\section{Introduction}
We assume that $R$ is a commutative noetherian ring. In the derived category of $R$, the level of $M \in \Db(R)$ with respect to a collection of objects $\Cl$ (referred to as the $\Cl$-level) is the number of mapping cones involving objects in $\Cl$ necessary to obtain $M$. The $\Cl$-level of an object $M$ with respect to a collection of objects $\Cl$ can give a wealth of information concerning both $M$ and the overarching ring $R$. As an example, when considering the collection of projective modules $\Proj$, the $\Proj$-level of $M$ gives similar information as the classical projective dimension of $M$. In particular, if $R$ is local, then $R$ is regular if and only if the $\Proj$-level of all homologically bounded complexes is finite (see for instance \cite[Lemma 1.2, Corollary 2.2]{LowerBound}). Obtaining universal upper and lower bounds for the $\Cl$-levels of complexes in $\Db(R)$ is an area of continuing research interest. In \cite[Theorem 5.5]{HomPerf}, Avramov, Buchweitz, Iyengar, and Miller establish that for an complex $M \in \Db(R)$ with finitely generated homology over a Noetherian ring, the $\Proj^\mathsf{f}$-level of $M$ (where $\Proj^\mathsf{f}$ is the collection of all finitely generated projective modules) is bounded above by the projective dimension of $\HH(M)^\oplus$ plus one. Altmann, Grifo, Monta\~{n}o, Sanders, and Vu establish in \cite{LowerBound} lower bounds for the $\Proj$-level using the Ghost Lemma. Expanding beyond the collection of projective modules, Awadalla and Marley establish in \cite{LevelGor} both lower and upper bounds for the level with respect to Gorenstein projective modules. Recently, Christensen, Kekkou, Lyle, and Soto Levins \cite{GPerf} have optimized the upper bound for the Gorenstein projective level obtained in \cite{LevelGor} for complexes with finitely generated homology. In particular, they have shown that for all $M \in \Dbf(R)$, the Gorenstein projective level of $M$ is bounded above by $\max\{2, \Gpd_R(\HH(M)^\oplus) + 1\}$. In order to obtain this bound, the authors of \cite{GPerf} used the Auslander-Bridger Formula, which allowed them to precisely control the Gorenstein projective dimension of a complex with the depth of the complex. In this paper, we extend the arguments made in \cite{GPerf} in order to obtain bounds of the Gorenstein projective level of all bounded complexes in $\sD(R)$, including complexes with non-finitely generated homology. In addition, the arguments we use can be applied more generally to obtain bounds for the flat level, Gorenstein injective level, and Gorenstein flat level of a bounded complex in $\sD(R)$. We obtain the following result.

\begin{varthmA} 
Let $M \in \Db(R)$, and let $\Cl$ be the class of flat, Gorenstein projective, Gorenstein injective, or Gorenstein flat modules. Then the following inequality holds:
\begin{equation*}
	\level{\Cl}(M) \leq \max\{2, \Cld(\HH(M)^\oplus) + 1\}
\end{equation*}
where $\Cld$ represents flat dimension, Gorenstein projective dimension, or Gorenstein flat dimension, respectively.
\end{varthmA}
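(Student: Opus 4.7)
The strategy is to produce, for $M\in\Db(R)$ with $n:=\Cld(\HH(M)^\oplus)<\infty$ (else the bound is vacuous), a cell decomposition witnessing $M\in\thick^{\max\{2,n+1\}}(\Cl)$. I would begin by reducing to $R$ local, since for each of the four classes both the $\Cl$-level and the $\Cl$-dimension of $\HH(M)^\oplus$ can be checked prime by prime.

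\smallskip

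The central technical step is to produce a $\Cl$-resolution $C\to M$ with $C$ concentrated in at most $\max\{2,n+1\}$ consecutive degrees. The key ingredient for each class is an Auslander--Bridger-type identity: for $\GP$ and $\GI$ the formulas relating $\Gpd_R$ and $\Gid_R$ to $\depth R - \depth_R(-)$, and for $\Flat$ and $\GF$ Chouinard-type formulas relating $\fd_R$ and $\Gfd_R$ to width. Applied to both $M$ and $\HH(M)^\oplus$, these identities force the $n$-th syzygy of a $\Cl$-resolution of $M$ to lie in $\Cl$, once one controls how $\depth_R(M)$ can differ from $\depth_R(\HH(M)^\oplus)$ in terms of the amplitude of $M$.

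\smallskip

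Once such a resolution is in hand, iterating brutal-truncation triangles places $M$ in $\thick^\ell(\Cl)$ for $\ell\leq\max\{2,n+1\}$. The $\max\{2,\cdot\}$ is unavoidable when $n=0$: even if every $\HH_i(M)$ already lies in $\Cl$, non-trivial $k$-invariants can prevent $M$ from being a direct sum of shifts of $\Cl$-objects, and one mapping cone between the non-positive and positive truncations of $M$ is generally required.

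\smallskip

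The principal obstacle is extending the Auslander--Bridger framework of \cite{GPerf} beyond the finitely generated setting. There, the identity only needs to hold for finitely generated modules; here it must be replaced by sup-formulas over all primes, which is precisely what the present paper is designed to handle. I expect four parallel but class-specific arguments, with the Gorenstein injective case dualized from Gorenstein projective (via Matlis duality) and the Gorenstein flat case from the flat one.
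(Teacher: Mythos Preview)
Your plan has two genuine gaps. First, the reduction to local $R$ is not justified: neither $\Cl$-level nor membership in $\Cl$ is known to localize for the Gorenstein classes. Localizing a Gorenstein projective (or Gorenstein injective) module need not yield one, and there is no general formula expressing $\level{\Cl}(M)$ as a supremum of $\operatorname{level}_{R_\px}^{\Cl_\px}(M_\px)$. So the very first step already fails for three of the four classes. Second, even granting a local reduction, the Auslander--Bridger/Chouinard machinery you invoke requires control of $\depth_R M$ (or $\operatorname{width}_R M$) in terms of $\depth_R \HH(M)^\oplus$ and $\amp(M)$, and you do not say how to get this; for complexes with non-finitely generated homology the relevant inequalities are delicate and not supplied by the sources you cite. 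The Matlis-duality reduction for $\GI$ is similarly unavailable over a general noetherian ring.

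The paper sidesteps all of this. It never localizes and never uses depth or width formulas. Instead it replaces $M$ by a bounded complex of $\Cl$-modules (via a finiteness lemma for $\Cld(M)$ given $\Cld(\HH(M)^\oplus)<\infty$), then runs an Adams resolution: for $\Flat$, $\GP$, $\GF$ one takes the $(n-1)$st projective Adams syzygy $S=\Omega^{n-1}(M)$, which has $\Cld(\HH(S)^\oplus)\le 1$, and for $\GI$ the dual injective Adams cosyzygy $T=\Theta^{n-1}(M)$. The heart of the argument is then purely elementary: one walks inductively through the standard short exact sequences linking $\B_i$, $\Z_i$, $\C_i$, $\HH_i$, $M_i$ and applies the one-step dimension-shift inequalities (e.g.\ $\Cld(L)\le\max\{\Cld(M),\Cld(N)-1\}$) to force all $\B_i(S)$ and $\Z_i(S)$ into $\Cl$. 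The triangle $\Z(S)\to S\to\Sigma\B(S)$ then gives $\level{\Cl}(S)\le 2$, and $n-1$ further cones from the Adams tower yield $\level{\Cl}(M)\le n+1$. So the mechanism producing the $\max\{2,\cdot\}$ is not a $k$-invariant obstruction as you suggest, but simply that the $\Z/\B$ triangle has two terms.
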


We also find stronger bounds for the projective and injective levels of any bounded complex.

\begin{varthmB}
Let $M \in \Db(R)$, and let $\Proj$ be the collection of all projective $R$-modules and $\Inj$ be the collection of all injective modules. The following inequalities holds:
\begin{align*}
	\level{\Proj} M &\leq \pd_R\HH(M)^\oplus + 1 \\
	\level{\Inj} M &\leq \id_R\HH(M)^\oplus + 1
\end{align*}
\end{varthmB}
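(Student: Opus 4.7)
The plan is to prove Theorem~B by induction on $n := \pd_R \HH(M)^\oplus$ for the projective inequality, with the injective version following by a dual argument. If $n = \infty$ the bound is vacuous, so assume $n$ is finite. For the base case $n \leq 0$, every $\HH_j M$ is projective, and I would show that $M$ is formal---that is, $M \simeq \bigoplus_j \HH_j M[j]$ in $\sD(R)$---by inducting on amplitude and repeatedly splitting off the top homology. The key input is the identification
\begin{equation*}
\Hom_{\sD(R)}(P[j], X) \;=\; \Hom_R(P, \HH_j X)
\end{equation*}
for any projective $P$ and any $X \in \sD(R)$. Applied to $P = \HH_s M$ with $s = \sup M$, this lifts the identity of $\HH_s M$ to a section $\HH_s M[s] \to M$ of the canonical truncation $M \to \HH_s M[s]$, which splits the truncation triangle. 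Hence $\level{\Proj}(M) \leq 1 = n+1$.

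For the inductive step, assume the bound holds for all complexes whose homology has projective dimension at most $n - 1$, with $n \geq 1$. For each $j \in [\inf M, \sup M]$, choose a surjection $\pi_j \colon P_j \twoheadrightarrow \HH_j M$ from a projective $P_j$ so that $\ker \pi_j$ has projective dimension at most $n - 1$ (the first step of a projective resolution works). Let $P := \bigoplus_j P_j[j]$, which satisfies $\level{\Proj}(P) \leq 1$. Using the identification above, the $\pi_j$'s assemble into a single morphism $f \colon P \to M$ in $\sD(R)$ with $\HH_j f = \pi_j$ for every $j$. Setting $M' := \operatorname{fib}(f)$, the long exact sequence of the triangle
\begin{equation*}
M' \longrightarrow P \xrightarrow{f} M \longrightarrow M'[1]
\end{equation*}
identifies $\HH_j M' \cong \ker \pi_j$, so $\pd_R \HH(M')^\oplus \leq n - 1$. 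The inductive hypothesis then gives $\level{\Proj}(M') \leq n$, and sub-additivity of level along triangles yields
\begin{equation*}
\level{\Proj}(M) \;\leq\; \level{\Proj}(P) + \level{\Proj}(M') \;\leq\; 1 + n \;=\; n + 1.
\end{equation*}

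The injective inequality dualizes cleanly: inject each $\HH_j M$ into an injective $I_j$ with $\id_R(I_j / \HH_j M) \leq n - 1$, set $I := \bigoplus_j I_j[j]$, and use the dual identification $\Hom_{\sD(R)}(X, I[j]) = \Hom_R(\HH_j X, I)$ for injective $I$ to build $g \colon M \to I$ realizing the injections $\HH_j M \hookrightarrow I_j$ on homology. The cofiber $C := \operatorname{cone}(g)$ satisfies $\HH_j C \cong I_j / \HH_j M$, reducing $\id_R \HH(C)^\oplus$ to at most $n - 1$, and the same inductive template closes the argument. I expect the main obstacle to be the simultaneous realization step: given the $\pi_j$'s (respectively injections) chosen independently in each homological degree, producing a single derived-category morphism $P \to M$ (resp.\ $M \to I$) that induces all of them at once. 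The $\Hom$-identifications above are precisely what allows this---they decouple the problem into independent per-degree data that assembles via direct sum into $f$ (resp.\ $g$); once the morphism exists, identifying the homology of its fiber (resp.\ cofiber) as the first syzygies (resp.\ cosyzygies) is immediate from the long exact sequence.
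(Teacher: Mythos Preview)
Your proposal is correct and follows essentially the same approach as the paper: induction on $\pd_R\HH(M)^\oplus$ (resp.\ $\id_R\HH(M)^\oplus$), with the inductive step accomplished by building a morphism from a graded-projective complex with zero differential that is surjective on homology and taking its fiber---which the paper packages as a step in a ``projective Adams resolution'' $\Omega^1(M)$, with the map constructed by choosing explicit cycles rather than via your $\Hom$-identification. The only minor difference is in the base case: the paper splits the surjection $\Z(M)^\oplus\twoheadrightarrow\HH(M)^\oplus$ all at once to produce a single quasi-isomorphism $\HH(M)\to M$, whereas you peel off the top homology one degree at a time by induction on amplitude; both arguments establish formality and the distinction is purely cosmetic.
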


As an immediate consequence of these theorems, we obtain upper bounds for the projective, injective, and flat levels over regular local rings, and upper bounds for the Gorenstein projective, Gorenstein injective, and Gorenstein flat levels over Gorenstein local rings.

As an additional application of the above results, we are able to obtain a version of the Bass Formula for injective levels.
\begin{varthmC}
Let $R$ be a noetherian local ring, and let $M \in \Dbf(R)$ such that $\id(\HH(M)^\oplus)) < \infty$. We have that
\begin{equation*}
\level{\Inj}(M) = \depth(R)+1
\end{equation*}
\end{varthmC}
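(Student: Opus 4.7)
The plan is to prove the equality $\level{\Inj}(M) = \depth R + 1$ by establishing matching upper and lower bounds. The upper bound follows directly from Theorem B together with the classical Bass formula, while the lower bound is the main technical obstacle and requires a more careful argument.

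For the upper bound, Theorem B gives $\level{\Inj}(M) \leq \id_R \HH(M)^\oplus + 1$. Because $M \in \Dbf(R)$, each $\HH_i(M)$ is finitely generated, and the hypothesis $\id_R \HH(M)^\oplus < \infty$ forces every nonzero $\HH_i(M)$ to have finite injective dimension (as a direct summand of a module of finite injective dimension). The classical Bass formula then yields $\id_R \HH_i(M) = \depth R$ for each nonzero homology module; taking the supremum over direct summands gives $\id_R \HH(M)^\oplus = \depth R$, so $\level{\Inj}(M) \leq \depth R + 1$.

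For the lower bound, a first attempt is a ghost-lemma argument using $\RHom_R(k, -)$: for any injective $R$-module $I$ the complex $\RHom_R(k, I) = \Hom_R(k, I)$ is concentrated in a single cohomological degree, so if $\level{\Inj}(M) \leq n$ then $\amp \RHom_R(k, M) \leq n - 1$, giving
\[
	\level{\Inj}(M) \geq \amp \RHom_R(k, M) + 1.
\]
The top of $\RHom_R(k, M)$ lives at degree $\id_R M = \depth R - \inf M$ by the Foxby-Bass formula for complexes of finite injective dimension, so the ceiling is under control.

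The hard part will be that this ghost-amplitude bound is not always sharp. For instance, when $M$ is a Cohen-Macaulay module of maximal depth, $\RHom_R(k, M)$ has amplitude zero and the bound yields only $\level{\Inj}(M) \geq 1$. To close the gap, the plan is to establish directly that $\level{\Inj}(N) \geq \id_R N + 1$ for any $R$-module $N$ of finite injective dimension --- this should follow by induction on $\level{\Inj}(N)$, observing that a module $N$ with $\level{\Inj}(N) \leq n$ admits an injective resolution of length at most $n - 1$ --- and then transfer this bound from $\HH(M)^\oplus$ to $M$ using truncation triangles and the shift-invariance of $\level{\Inj}$. I expect the main subtlety to be verifying that the injective level does not drop when passing from $M$ to its total homology module, which will require carefully tracking how $\HH(M)^\oplus$ sits inside the thick subcategory generated by $M$.
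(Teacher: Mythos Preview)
Your upper-bound argument is correct and matches the paper's.

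The lower bound has a genuine gap. You correctly diagnose that the amplitude bound from $\RHom_R(k,-)$ is not sharp: it yields $\level{\Inj}(M) \geq \id_R M - \depth_R M + 1$, which falls short of $\depth R + 1$ whenever $\depth_R M > -\inf M$. But your proposed repair --- prove $\level{\Inj}(N) \geq \id_R N + 1$ for modules $N$ and then ``transfer'' to $M$ via $\HH(M)^\oplus$ --- cannot work as stated. The inequality $\level{\Inj}(M) \geq \level{\Inj}(\HH(M)^\oplus)$ is \emph{false} in general: for $R = k[x]/(x^2)$ and $M$ the Koszul complex on $x$, one has $\level{\Inj}(M) = 2$ while $\level{\Inj}(\HH(M)^\oplus) = \infty$ (this is exactly the example discussed after the theorem in the paper). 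Truncation triangles show that $M$ lies in the thick subcategory generated by its homology modules, not the other way around, so there is no mechanism to bound $\level{\Inj}(M)$ from below by $\level{\Inj}(\HH(M)^\oplus)$. Under the hypotheses of the theorem the two quantities do turn out to be equal, but that is a consequence of the result, not a route to it.

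The paper avoids this entirely. It first uses Lemma~\ref{FiniteDim} to deduce $\id_R M < \infty$ from $\id_R(\HH(M)^\oplus) < \infty$, and then invokes a coghost-type lower bound for \emph{complexes} from \cite{LB2}:
\[
\level{\Inj}(M) \;\geq\; \id_R M + \inf M + 1.
\]
The Bass formula for complexes, $\id_R M + \inf M = \depth R$, then gives the desired lower bound directly. The essential difference is that this bound is expressed in terms of invariants of $M$ itself rather than of $\HH(M)^\oplus$, and is obtained by a gap-in-homology coghost argument (dual to \cite[Thm.~2.1]{LowerBound}) applied to a semi-injective replacement of $M$.
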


In addition, we obtain a weaker version of the Bass Formula for Gorenstein injective levels.
\begin{varthmD}
Let $R$ be a noetherian local ring of positive depth, and let $M \in \Dbf(R)$ such that $\Gid(\HH(M)^\oplus) < \infty$. We have that
\begin{equation*}
\level{\GI}(M) = \depth(R) + 1
\end{equation*}
\end{varthmD}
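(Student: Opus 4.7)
The plan is to prove the equality $\level{\GI}(M) = \depth(R) + 1$ by establishing matching upper and lower bounds; the lower bound is the main obstacle.

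For the upper bound, Theorem A gives
\[
\level{\GI}(M) \leq \max\{2, \Gid_R(\HH(M)^\oplus) + 1\}.
\]
Since $M \in \Dbf(R)$, each nonzero $\HH^i(M)$ is a finitely generated $R$-module of finite Gorenstein injective dimension, so by the Bass-type formula for Gorenstein injective dimension of finitely generated modules over a local ring, $\Gid_R(\HH^i(M)) = \depth(R)$. Since $\Gid_R$ of a direct sum is the supremum of the individual values, $\Gid_R(\HH(M)^\oplus) = \depth(R)$, and because $\depth(R) \geq 1$ the maximum resolves to $\depth(R) + 1$.

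The matching lower bound $\level{\GI}(M) \geq \depth(R) + 1$ is the main obstacle, because the direct ghost argument behind Theorem C does not transplant. In that proof one uses $\Ext^i_R(k, I) = 0$ for $i > 0$ and every injective $I$ to propagate vanishing of $\Ext^i_R(k, -)$ through mapping cones, so the nonvanishing of $\Ext^{\depth R}_R(k, M)$, supplied by the classical Bass formula, forces $\level{\Inj}(M) \geq \depth(R) + 1$. The residue field is not, however, $\Ext$-orthogonal to Gorenstein injective modules in general (over a Gorenstein ring $k$ is eventually itself Gorenstein injective, and $\Ext^*_R(k,k)$ is highly nontrivial), so a different test object is required.

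My plan is to transfer the problem through Matlis duality. The functor $(-)^\vee = \Hom_R(-, E_R(k))$ is exact, commutes with mapping cones, and by classical duality results sends Gorenstein injective modules to Gorenstein flat modules, so $\level{\GF}(M^\vee) \leq \level{\GI}(M)$. The dualized complex $M^\vee$ has Artinian homology and finite Gorenstein flat dimension, and on this side one can adapt the ghost-lemma machinery used in \cite{LevelGor} and \cite{GPerf} for the Gorenstein projective level, where the Auslander--Bridger formula plays the role the Bass formula plays for $\GI$, to obtain $\level{\GF}(M^\vee) \geq \depth(R) + 1$. The positive depth hypothesis enters precisely to supply a maximal $R$-regular sequence out of which enough composable ghosts can be assembled, and to rule out the degenerate case $\depth(R) = 0$ in which $\GI$ is too large for the inequality to hold.
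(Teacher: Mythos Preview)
Your upper bound argument is correct and matches the paper's exactly: Theorem~A plus the Bass formula for $\Gid$ of finitely generated modules gives $\level{\GI}(M) \leq \depth(R)+1$ once $\depth(R)\geq 1$.

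For the lower bound, however, your proposal is only a plan, and it has real gaps. The paper does \emph{not} dualize; it cites directly a lower bound from \cite{LB2} of the form
\[
\level{\GI}(M) \;\geq\; \Gid_R(M) + \inf(M) + 1,
\]
valid for any $M\in\Db(R)$ with finite $\Gid$, and then invokes the Bass formula for Gorenstein injective dimension of \emph{complexes} with finitely generated homology, $\Gid_R(M)+\inf(M)=\depth(R)$. That is the entire argument.

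Your Matlis-duality route would need two ingredients you have not supplied. First, the inequality $\level{\GF}(M^\vee)\leq\level{\GI}(M)$ requires that $\Hom_R(-,E_R(k))$ carry Gorenstein injective modules to Gorenstein flat modules; the standard reference direction is the opposite one ($\GF\mapsto\GI$), and the reverse implication needs justification. Second, and more seriously, you never establish $\level{\GF}(M^\vee)\geq\depth(R)+1$. The complex $M^\vee$ has Artinian homology, not finitely generated homology, so the Auslander--Bridger formula and the ghost constructions in \cite{LevelGor} and \cite{GPerf}, which are stated for $\Dbf(R)$, do not apply as written. ``Adapt the ghost-lemma machinery'' is not a proof; you would essentially be reproving the content of \cite{LB2} on the flat side, which is more work than simply citing it on the injective side as the paper does.

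One further correction: the positive-depth hypothesis is \emph{not} needed for the lower bound. The inequality $\level{\GI}(M)\geq\depth(R)+1$ holds (trivially when $\depth(R)=0$) via the cited results regardless. Positive depth is needed only to collapse $\max\{2,\depth(R)+1\}$ to $\depth(R)+1$ in the upper bound; the paper's example after the theorem shows the equality can fail when $\depth(R)=0$ precisely because the upper bound is then $2$, not $1$.
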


At the end of the paper, we demonstrate that the bounds obtained for each of the homological and Gorenstein homological levels considered is optimal.

\section{Background}
\subsection{The Derived Category and Levels}
All $R$-complexes $M$ (that is, chain complexes of $R$-modules) will be graded homologically.
\begin{equation*}
M = \cdots \longrightarrow M_{n+1} \xrightarrow{\partial_{n+1}} M_n \xrightarrow{\partial_n} M_{n-1} \longrightarrow \cdots
\end{equation*}
For every $n \in \mathbb{Z}$, we let $\Z_n(M) = \ker(\partial_n)$, $\B_n(M) = \operatorname{im}(\partial_{n+1})$, $\HH_n(M) = \Z_n(M)/\B_n(M)$, and $\C_n(M) = M_n/\B_n(M)$. We denote the homological supremum of $M$ as $\sup(M)$, and the homological infimum as $\inf(M)$. It is important to note that these are not the graded supremum or infimum of $M$.

We work in the derived category of a ring, denoted $\sD(R)$. This category is obtained from the category of chain complexes by formally inverting all quasi-isomorphisms. Unlike the category of chain complexes, the derived category is rarely abelian. Instead, the derived category is a triangulated category, where distinguished triangles take the place of short exact sequences. As with any triangulated category, it is possible to define the level of a complex with respect to some collection of objects in $\sD(R)$. Intuitively, the level of a complex is the smallest number of mapping cones needed to build the complex with respect to the given collection of objects in $\sD(R)$. A formal definition is given below.

\begin{defn}
Let $\mathsf{C}$ be a non-empty collection of objects in $\sD(R)$. We recursively define the $n$th thickening of $\mathsf{C}$ as follows:
\begin{enumerate}
	\item $\thick^0(\mathsf{C})$ is the zero object
	\item $\thick^1(\mathsf{C})$ is all objects in $\sD(R)$ that are isomorphic to shifts, direct summands, or finite direct sums of objects in $\Cl$.
	\item For $n \geq 1$, $\thick^n(\mathsf{C})$ is all objects $M \in \sD(R)$ such that there exists a distinguished triangle
	\begin{equation*}
		K \longrightarrow L \oplus M \longrightarrow N \longrightarrow
	\end{equation*}
	with $K \in \thick^1(\mathsf{C})$ and $N \in \thick^{n-1}(\mathsf{C})$. 
\end{enumerate}

Let $M$ be an object in $\sD(R)$. We now define the level of $M$ with respect to $\mathsf{C}$ as follows:
\begin{equation*}
	\level{\mathsf{C}}(M) = \inf\{n \, | \, M \in \thick^{n}(\mathsf{C})\}
\end{equation*}
\end{defn}

There are a few basic lemmas that are commonly used to work with levels, and for completeness they are listed below.

\begin{lemma}\label{BasicLemma}
Let $\Cl$ be a non-empty collection of objects in $\sD(R)$. Let $L$, $M$, and $N$ be objects in $\sD(R)$.
\begin{enumerate}
	\item $\level{\mathsf{C}}(M) = \level{\mathsf{C}}(N)$ if $M \simeq N$.
	\item $\level{\mathsf{C}}(M) = \level{\mathsf{C}}(\Sigma^n M)$ for all $n \in \mathbb{Z}$
	\item If $L \longrightarrow M \longrightarrow N \longrightarrow$ is a distinguished triangle, then
	\begin{equation*}
		\level{\mathsf{C}}(M) \leq \level{\mathsf{C}}(L) + \level{\mathsf{C}}(N)
	\end{equation*}
	\item $\level{\mathsf{C}}(M \oplus N) = \max\{\level{\mathsf{C}}(M), \level{\mathsf{C}}(N)\}$
\end{enumerate} 
\end{lemma}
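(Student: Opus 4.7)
The four parts are standard closure and subadditivity properties of the thickening construction; I would handle (1), (2), (4) as bookkeeping and focus on (3), which carries the real content.

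For (1), each $\thick^n(\Cl)$ is closed under isomorphism: if $M \in \thick^n(\Cl)$ is witnessed by a triangle $K \to L \oplus M \to N' \to$ and $M \simeq M'$, the same triangle up to isomorphism witnesses $M' \in \thick^n(\Cl)$. For (2), $\thick^1(\Cl)$ is closed under suspension by construction, so an inductive argument on $n$ (applying $\Sigma$ to a defining triangle, using that $\Sigma K \in \thick^1(\Cl)$ and, by induction, $\Sigma N' \in \thick^{n-1}(\Cl)$) gives the same for every $\thick^n(\Cl)$. For (4), each $\thick^n(\Cl)$ is closed under direct summands---by regrouping a defining triangle $K \to L \oplus (M \oplus M') \to N' \to$ as $K \to (L \oplus M') \oplus M \to N' \to$---and under finite direct sums (by induction, summing two defining triangles). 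Summand closure gives $\level{\Cl}(M), \level{\Cl}(M') \leq \level{\Cl}(M \oplus M')$, while sum closure gives the reverse inequality.

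Part (3) is the substantive point. I would induct on $a := \level{\Cl}(L)$, keeping $b := \level{\Cl}(N)$ arbitrary. The case $a = 0$ is trivial since $L = 0$ forces $M \simeq N$, so $\level{\Cl}(M) = b$. For $a \geq 1$, fix a defining triangle $K \to L' \oplus L \to P \to$ for $L$ with $K \in \thick^1(\Cl)$ and $P \in \thick^{a-1}(\Cl)$, and form the direct sum of the given triangle $L \to M \to N \to$ with the trivial triangle $L' \xrightarrow{\id} L' \to 0 \to$ to obtain $L' \oplus L \to L' \oplus M \to N \to$. Apply the octahedral axiom to the composition $K \to L' \oplus L \to L' \oplus M$: this produces distinguished triangles $K \to L' \oplus M \to Q \to$ and $P \to Q \to N \to$. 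The inductive hypothesis applied to the second triangle gives $\level{\Cl}(Q) \leq (a - 1) + b$, and then the first triangle together with $K \in \thick^1(\Cl)$ witnesses $M \in \thick^{a + b}(\Cl)$ directly by the definition of the thickening.

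The delicate point is orchestrating this octahedral step---one must sum the given triangle with the trivial triangle on $L'$ so that $L' \oplus M$ (rather than $M$ alone) appears in the middle of the resulting triangle, which is precisely the shape required by the definition of $\thick^{a+b}(\Cl)$. Once that is set up, the cone computation is immediate, and the whole lemma reduces to iterated applications of the closure properties of $\thick^n(\Cl)$ together with this single use of the octahedral axiom.
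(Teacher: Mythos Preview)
Your argument is correct and self-contained. The paper does not actually prove this lemma; it simply cites \cite[Lemma 2.4]{HomPerf}. So your proposal is not a different route so much as an actual proof where the paper defers to the literature.

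A few remarks on your write-up. In part (4) you implicitly use that the filtration $\thick^0(\Cl) \subseteq \thick^1(\Cl) \subseteq \cdots$ is increasing, so that if $M \in \thick^a(\Cl)$ and $M' \in \thick^b(\Cl)$ with $a \leq b$ then both lie in $\thick^b(\Cl)$ before you sum the defining triangles; this is immediate from the definition (take $K = 0$) but worth a word. In part (3), your use of the octahedral axiom is clean and correctly identifies the key maneuver: padding the given triangle with the identity triangle on $L'$ so that the middle term becomes $L' \oplus M$, matching the shape required by the definition of $\thick^{a+b}(\Cl)$. The induction on $a$ is set up correctly since $\level{\Cl}(P) \leq a-1$ allows the hypothesis to apply to the triangle $P \to Q \to N \to$.
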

\begin{proof}
See \cite[Lemma 2.4]{HomPerf}
\end{proof}

\subsection{Adams Resolutions}
The concept of an Adams resolution of an object in a triangulated category (containing enough projectives/injectives) was first developed by Christensen in \cite{IdealTriang}. We will use Adams resolutions of objects in $\Db(R)$ as described in \cite[Construction 3.1]{GPerf}. We first construct an projective Adams resolution  of $M \in \Db(R)$ in the following way. In $M$, choose a (possibly infinite) set of cycles $\{z_i\}_{i \in \Lambda}$ such that their homology classes generate $\HH(M)$. For each $z_i$ in this set, consider the complex $F_i = \Sigma^{|z_i|} R$ and the associated morphism $\phi_i: F_i \to M$ that sends $1$ to $z_i$. Let $F^0 = \bigoplus\limits_{i \in \Lambda} F_i$ and $\phi^0: F^0 \to M$ the morphism inherited from $\{\phi_i\}_{i \in \Lambda}$. We set $\Omega^1(M) = \Sigma^{-1}\text{Cone}(\phi^0)$, which yields the distinguished triangle
\begin{equation*}
	\Omega^1(M) \longrightarrow F^0 \overset{\phi^0}{\longrightarrow} M \longrightarrow
\end{equation*}
We recursively define $F^n$ to be a graded-free module that maps onto the generating cycles of $\Omega^n(M)$ using the same process as in the previous paragraph, and set $\Omega^{n+1}(M) = \Omega^1(\Omega^n(M))$. For all $n \in \mathbb{N}$, we have the distinguished triangle
\begin{equation*}
	\Omega^{n+1}(M) \longrightarrow F^n \overset{\phi^n}{\longrightarrow} \Omega^{n}(M) \longrightarrow
\end{equation*}

Since $F^n$ surjects onto $\HH(\Omega^n(M))$ by construction, we also have the short exact sequences
\begin{equation*}
	0 \longrightarrow \HH(\Omega^{n+1}(M)) \longrightarrow F^n \xrightarrow{\HH(\phi^n)} \HH(\Omega^n(M)) \longrightarrow 0
\end{equation*}
for all $n \in \mathbb{N}$.

In addition, we will construct an injective Adams resolution of $M \in \Db(R)$ in a dual manner to the previous construction. Consider the character complex $\Hom_R(M, \mathbb{E})$ of $M$ (where $\mathbb{E}$ is a faithfully injective $R$-module). Now construct the complex $F^0$ for the complex $\Hom_R(M, \mathbb{E})$ in the same process as above, and let $I^0 = \Hom_R(F^0, \mathbb{E})$. $I^0$ is a bounded complex of injective modules with zero differential, and there exists a map $\iota^0: M \to I^0$ which is composition of the biduality map $M \to \Hom_R(\Hom_R(M, \mathbb{E}), \mathbb{E})$ with the map $\Hom_R(\Hom_R(M, \mathbb{E}), \mathbb{E}) \to \Hom_R(F^0, \mathbb{E})$.  By construction, the map on homology $F^0 \to \HH(\Hom_R(M, \mathbb{E}))$ is surjective, and so the map $\HH(\Hom_R(\Hom_R(M, \mathbb{E}), \mathbb{E}) \to I^0$ is injective. The biduality morphism on homology $\HH(M) \to \Hom_R(\Hom_R(\HH(M), \mathbb{E}), \mathbb{E})$ is injective \cite[Proposition 4.5.3]{DCMCA}, and we have $\Hom_R(\Hom_R(\HH(M), \mathbb{E}), \mathbb{E}) \cong \HH(\Hom_R(\Hom_R(M, \mathbb{E}), \mathbb{E}))$ since $\mathbb{E}$ is injective. This forces $\HH(\iota^0): \HH(M) \to I^0$ to be injective. This yields the distinguished triangle
\begin{equation*}
	M \overset{\iota^0}{\longrightarrow} I^0 \longrightarrow \Theta^1(M) \longrightarrow
\end{equation*}
where $\Theta^1(M)$ is defined as the mapping cone of $\iota^0$. We also have the short exact sequence
\begin{equation*}
	0 \longrightarrow \HH(M) \xrightarrow{\HH(\iota^0)} I^0 \longrightarrow \HH(\Theta^1(M)) \longrightarrow 0
\end{equation*}

Through iteration in the same manner as the projective Adams resolution, we have distinguished triangles
\begin{equation*}
	\Theta^n(M) \overset{\iota^n}{\longrightarrow} I^n \longrightarrow \Theta^{n+1}(M) \longrightarrow
\end{equation*}
for all $n \in \mathbb{N}$ where $I^n$ is a bounded complex of injective modules with zero differential, and $\Theta^{n+1}(M) = \Theta^1(\Theta^n(M))$. These distinguished triangles also induce short exact sequences
\begin{equation*}
	0 \longrightarrow \HH(\Theta^n(M)) \xrightarrow{\HH(\iota^n)} I^n \longrightarrow \HH(\Theta^{n+1}(M)) \longrightarrow 0
\end{equation*}

We will need two lemmas first established in \cite{GPerf} that allow us to effectively bound levels of complexes and homological dimensions using Adams resolutions.

\begin{lemma}\label{SyzygyInequality}
Let $M \in \Db(R)$. Suppose that $\mathsf{F}$ be a collection of objects in $\sD(R)$ that contains all free modules, and suppose that $\mathsf{I}$ is a collection of objects in $\sD(R)$ that contains all injective modules. Then we have the inequalities
\begin{align*}
	\level{\mathsf{F}}(M) &\leq \level{\mathsf{F}}\Omega^n(M) + n \\
	\level{\mathsf{I}}(M) &\leq \level{\mathsf{I}}\Theta^n(M) + n 
\end{align*}
\end{lemma}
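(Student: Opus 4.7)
The plan is to prove both inequalities by induction on $n$, with the base case $n=1$ being the crux. Both base cases reduce to a single observation: the complexes $F^0$ and $I^0$ constructed in the Adams resolutions have $\mathsf{F}$-level and $\mathsf{I}$-level at most $1$, respectively. Granting this, I rotate the defining distinguished triangles
\begin{equation*}
\Omega^1(M) \longrightarrow F^0 \longrightarrow M \longrightarrow \quad\text{and}\quad M \longrightarrow I^0 \longrightarrow \Theta^1(M) \longrightarrow
\end{equation*}
so that $M$ sits in the middle term, and apply Lemma~\ref{BasicLemma}(2)--(3) to deduce
\begin{equation*}
\level{\mathsf{F}}(M) \leq 1 + \level{\mathsf{F}}(\Omega^1(M)) \quad\text{and}\quad \level{\mathsf{I}}(M) \leq 1 + \level{\mathsf{I}}(\Theta^1(M)).
\end{equation*}
The general case then follows by iteration: replacing $M$ by $\Omega^{n}(M)$ or $\Theta^{n}(M)$, applying the base case once more, and invoking the inductive hypothesis yields the stated bounds.

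The main work is therefore to verify $F^0, I^0 \in \thick^1$ of the appropriate class. For $F^0 = \bigoplus_{i \in \Lambda} \Sigma^{|z_i|} R$, the hypothesis $M \in \Db(R)$ forces the degrees $|z_i|$ of cycles with nonzero homology class to lie in the finite range $[\inf M, \sup M]$. Grouping cycles by degree gives a decomposition
\begin{equation*}
F^0 \;\cong\; \bigoplus_{k=\inf M}^{\sup M} \Sigma^{k} R^{(I_k)}, \qquad I_k = \{i \in \Lambda : |z_i|=k\},
\end{equation*}
namely a \emph{finite} direct sum of shifts of the free $R$-modules $R^{(I_k)}$. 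Since $\mathsf{F}$ contains every free module by hypothesis, $F^0$ is built from objects of $\mathsf{F}$ using only shifts and a finite direct sum, and therefore lies in $\thick^1(\mathsf{F})$. The analogous statement for $I^0$ is even cleaner: by construction $I^0$ is a bounded complex of injective modules with zero differential, so it splits as a finite direct sum $\bigoplus_{k} \Sigma^k J_k$ with each $J_k$ injective, placing $I^0 \in \thick^1(\mathsf{I})$.

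The only real obstacle is the bookkeeping around potentially infinite index sets $\Lambda$: the definition of $\thick^1$ permits only \emph{finite} direct sums, so one must collapse the infinite sum defining $F^0$ into finitely many shifts of (possibly infinite-rank) free modules before invoking the hypothesis that $\mathsf{F}$ contains all free modules. This collapse is precisely where the assumption $M \in \Db(R)$ is essential. Once the base cases are in hand, the triangle inequality of Lemma~\ref{BasicLemma}(3), together with the shift-invariance of Lemma~\ref{BasicLemma}(2) and a straightforward induction, finishes the proof.
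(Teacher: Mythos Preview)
Your proof is correct and follows essentially the same approach as the paper: establish the $n=1$ case by observing that $F^0$ (resp.\ $I^0$) is a bounded complex of free (resp.\ injective) modules with zero differential, hence has level at most $1$, and then apply the triangle inequality from Lemma~\ref{BasicLemma}. You are in fact more careful than the paper in explaining why $F^0 \in \thick^1(\mathsf{F})$---namely, that $M \in \Db(R)$ lets one regroup the possibly infinite direct sum into a finite direct sum of shifts of free modules---which the paper leaves implicit.
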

\begin{proof}
This proof is essentially \cite[Lemma 3.3]{GPerf} but we prove it in full generality. For the first inequality, we only need to show that
\begin{equation*}
	\level{\mathsf{F}}(M) \leq \level{\mathsf{F}}\Omega^1(M) + 1
\end{equation*}
and to do this, we need only look at the distinguished triangle
\begin{equation*}
	\Omega^1(M) \longrightarrow F^0 \overset{\phi^0}{\longrightarrow} M \longrightarrow
\end{equation*}
and recognize that $F^0$ is a bounded complex of free modules with zero differential. Lemma~\ref{BasicLemma} then gives the necessary inequality.
To prove the second inequality, we only need to show that
\begin{equation*}
	\level{\mathsf{I}}(M) \leq \level{\mathsf{I}}\Theta^1(M) + 1
\end{equation*}
and to do this, we need only look at the distinguished triangle
\begin{equation*}
	M \overset{\iota^0}{\longrightarrow} I^0 \longrightarrow \Theta^1(M) \longrightarrow
\end{equation*}
and recognize that $I^0$ is a bounded complex of injective modules with zero differential. Lemma~\ref{BasicLemma} then gives the necessary inequality.	
\end{proof}

\begin{lemma}\label{Splice}
Let $M \in \Db(R)$. For every integer $n \geq 1$ we have exact sequences
\begin{equation*}
	0 \longrightarrow \HH(\Omega^n(M)) \longrightarrow F_{n-1} \longrightarrow \cdots \longrightarrow F_0 \longrightarrow \HH(M) \longrightarrow 0
\end{equation*}
\begin{equation*}
	0 \longrightarrow \HH(M) \longrightarrow I^0 \longrightarrow I^1 \cdots \longrightarrow I^{n-1} \longrightarrow \HH(\Theta^n(M)) \longrightarrow 0
\end{equation*}
\end{lemma}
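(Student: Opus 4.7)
The plan is to prove both statements by a straightforward induction on $n$, splicing together the short exact sequences already produced by the projective and injective Adams resolution constructions in the preceding background section.

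For the first sequence, I take as the base case $n=1$ the short exact sequence
\begin{equation*}
0 \longrightarrow \HH(\Omega^1(M)) \longrightarrow F^0 \xrightarrow{\HH(\phi^0)} \HH(M) \longrightarrow 0
\end{equation*}
which is exactly the short exact sequence recorded in the construction of the projective Adams resolution. For the inductive step, I assume that for some $n \geq 1$ we already have the exact sequence
\begin{equation*}
0 \longrightarrow \HH(\Omega^{n}(M)) \longrightarrow F^{n-1} \longrightarrow \cdots \longrightarrow F^0 \longrightarrow \HH(M) \longrightarrow 0,
\end{equation*}
and I apply the defining short exact sequence $0 \to \HH(\Omega^{n+1}(M)) \to F^n \to \HH(\Omega^n(M)) \to 0$ (obtained from the construction using that $\Omega^{n+1}(M) = \Omega^1(\Omega^n(M))$) to splice $F^n$ onto the left. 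Concretely, I define the new boundary map $F^n \to F^{n-1}$ as the composition $F^n \twoheadrightarrow \HH(\Omega^n(M)) \hookrightarrow F^{n-1}$. Exactness at $F^{n-1}$ is preserved because the image of $F^n \to F^{n-1}$ is exactly the image of $\HH(\Omega^n(M)) \hookrightarrow F^{n-1}$, which equals the kernel of $F^{n-1} \to F^{n-2}$ by the inductive hypothesis; exactness at $F^n$ follows from the injectivity of $\HH(\Omega^{n+1}(M)) \to F^n$ combined with exactness of the defining short exact sequence; and exactness at $\HH(\Omega^{n+1}(M))$ is automatic.

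For the second sequence, the argument is completely dual, using the short exact sequences
\begin{equation*}
0 \longrightarrow \HH(\Theta^k(M)) \xrightarrow{\HH(\iota^k)} I^k \longrightarrow \HH(\Theta^{k+1}(M)) \longrightarrow 0
\end{equation*}
that arise from the injective Adams resolution (with the convention $\Theta^0(M) = M$). The base case $n=1$ is one of these. For the inductive step, given
\begin{equation*}
0 \longrightarrow \HH(M) \longrightarrow I^0 \longrightarrow \cdots \longrightarrow I^{n-1} \longrightarrow \HH(\Theta^n(M)) \longrightarrow 0,
\end{equation*}
I splice $I^n$ on the right using the map $I^{n-1} \twoheadrightarrow \HH(\Theta^n(M)) \hookrightarrow I^n$ and verify exactness at the three new spots using surjectivity of $I^{n-1} \to \HH(\Theta^n(M))$ together with the short exact sequence at level $n$.

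There is no genuine obstacle here: the whole content of the lemma is the standard fact that a sequence of short exact sequences whose end terms match can be spliced into a long exact sequence. The only thing to be careful about is the indexing conventions and making sure the inductive hypothesis at each step provides the injection $\HH(\Omega^n(M)) \hookrightarrow F^{n-1}$ (respectively the surjection $I^{n-1} \twoheadrightarrow \HH(\Theta^n(M))$) used to form the spliced differential.
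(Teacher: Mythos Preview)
Your proposal is correct and follows the same approach as the paper, which simply says the sequences are obtained by splicing together the short exact sequences on homology already constructed in the Adams resolution section. Your write-up is more detailed than the paper's one-line justification, but the underlying argument is identical.
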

\begin{proof}
This follows from splicing together the short exact sequences on homology constructed earlier this section.
\end{proof}

\subsection{Homological and Gorenstein Homological Dimensions}
The development of projective, injective, and flat modules in commutative algebra is fundamental in the field, as is the construction of projective, injective, and flat resolutions of arbitrary complexes. For this paper, we need the following classical bounds on projective, injective, and flat dimensions of complexes in short exact sequences.

\begin{lemma}\label{DimBound}
Let $0 \longrightarrow L \longrightarrow M \longrightarrow N \longrightarrow 0$ be a short exact sequence of $R$-complexes. If any two of the three complexes has finite projective, injective, or flat dimension, then so does the third. In addition the following inequalities hold:
\begin{align*}
	\pd(L) &\leq \max(\pd(M), \pd(N) - 1) \\
	\fd(L) &\leq \max(\fd(M), \fd(N) - 1) \\
	\id(N) &\leq \max(\id(L)-1, \id(M))
\end{align*}
\end{lemma}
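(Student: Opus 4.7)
The plan is to pass from the short exact sequence of complexes to the distinguished triangle
\begin{equation*}
L \longrightarrow M \longrightarrow N \longrightarrow \Sigma L
\end{equation*}
in $\sD(R)$, and then characterize the three dimensions in question via vanishing of derived functors applied to modules. Specifically, for $X \in \sD(R)$ I will use the standard characterizations
\begin{equation*}
\pd(X) = \sup\{\, n : \Ext_R^n(X,T) \neq 0 \text{ for some module } T\,\},
\end{equation*}
and the analogous ones for $\id$ (applying $\RHom_R(T,-)$) and $\fd$ (applying $-\ldt T$). Each of these is immediate from the definitions of these dimensions for complexes.

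For the first inequality, I would apply $\RHom_R(-,T)$ to the triangle and read off the piece of the resulting long exact sequence
\begin{equation*}
\Ext_R^n(N,T) \longrightarrow \Ext_R^n(M,T) \longrightarrow \Ext_R^n(L,T) \longrightarrow \Ext_R^{n+1}(N,T).
\end{equation*}
If $n > \max\{\pd(M),\pd(N)-1\}$, both flanking terms vanish for every module $T$, forcing $\Ext_R^n(L,T)=0$ and yielding $\pd(L) \leq \max\{\pd(M),\pd(N)-1\}$. An identical argument with $-\ldt T$ and the Tor long exact sequence produces the flat-dimension bound. For the injective-dimension bound I would instead apply $\RHom_R(T,-)$ and use the segment
\begin{equation*}
\Ext_R^n(T,M) \longrightarrow \Ext_R^n(T,N) \longrightarrow \Ext_R^{n+1}(T,L),
\end{equation*}
which kills $\Ext_R^n(T,N)$ whenever $n > \max\{\id(M),\id(L)-1\}$.

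For the "any two finite implies the third is finite" assertion, I would run two more rounds of the same long exact sequence trick to record companion inequalities of the form $\pd(M) \leq \max\{\pd(L),\pd(N)\}$ and $\pd(N) \leq \max\{\pd(L)+1,\pd(M)\}$, and the analogous pairs for $\fd$ and $\id$. Together with the inequalities already written in the lemma, these three bounds say that each dimension among $\{L,M,N\}$ is controlled by the other two, so if any two are finite the third is automatically finite.

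Honestly, there is no real obstacle here: this is the classical complex-level extension of the usual module statement, and the only things to be mildly careful about are (i) working in $\sD(R)$ rather than at the chain level, so that a short exact sequence of complexes genuinely produces a distinguished triangle, and (ii) taking $T$ to range over modules (not arbitrary complexes) in the characterizations of $\pd$, $\id$, and $\fd$, which is the usual convention and what makes the long exact sequence arguments go through cleanly.
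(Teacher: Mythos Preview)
Your proposal is correct and is exactly the standard argument; the paper itself does not give a proof but simply cites Chapter~8 of \cite{DCMCA}, so you are in fact supplying the details the paper omits. There is nothing to compare beyond noting that your long exact sequence argument in $\sD(R)$ is precisely what that reference contains.
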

\begin{proof}
These are standard results in homological algebra - for reference see Chapter 8 in \cite{DCMCA}
\end{proof}

The development of Gorenstein projective, Gorenstein injective, and Gorenstein flat modules in commutative algebra is more recent. These modules are generalizations of classical projective, injective, and flat modules, and so occasionally have weaker homological properties. However, many properties of standard homological dimensions do pass over to Gorenstein homological dimensions. One example is a Gorenstein analogue of the previous lemma.

\begin{lemma}\label{GdimBound}
Let $0 \longrightarrow L \longrightarrow M \longrightarrow N \longrightarrow 0$ be a short exact sequence of $R$-complexes. If any two of the three complexes has finite Gorenstein projective, injective, or flat dimension, then so does the third. In addition, the following inequalities hold:
\begin{align*}
	\Gpd(L) &\leq \max(\Gpd(M), \Gpd(N) - 1) \\
	\Gfd(L) &\leq \max(\Gfd(M), \Gfd(N) - 1) \\
	\Gid(N) &\leq \max(\Gid(L)-1, \Gid(M))
\end{align*}
\end{lemma}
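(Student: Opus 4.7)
The plan is to mimic the classical proof of Lemma~\ref{DimBound}, replacing the usual projective, injective, and flat resolution criteria with the vanishing-of-derived-functor characterizations of Gorenstein dimensions. Specifically, I would start by recalling that for a noetherian $R$ and $X \in \Db(R)$, the condition $\Gpd_R(X) \leq n$ is detected by vanishing of $\Ext^i_R(X,P)$ in degrees $i > n$ against all modules $P$ with $\pd_R(P) < \infty$; similarly, $\Gfd_R(X) \leq n$ is detected by vanishing of $\Tor^R_i(X,E)$ for $i > n$ against all modules $E$ with $\id_R(E) < \infty$, and $\Gid_R(X) \leq n$ is detected by vanishing of $\Ext^i_R(E,X)$ for $i > n$ against the same test class. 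These characterizations at the level of complexes (as opposed to just modules) are developed in \cite{DCMCA}.

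Given the short exact sequence $0 \to L \to M \to N \to 0$, I would pass to the induced distinguished triangle in $\Db(R)$ and apply the appropriate derived functor. To bound $\Gpd(L)$, apply $\RHom_R(-,P)$ with $\pd_R(P) < \infty$; the resulting long exact sequence
\begin{equation*}
\cdots \to \Ext^i(N,P) \to \Ext^i(M,P) \to \Ext^i(L,P) \to \Ext^{i+1}(N,P) \to \cdots
\end{equation*}
forces $\Ext^i(L,P) = 0$ whenever $i > \max(\Gpd(M), \Gpd(N)-1)$, and taking the supremum over all valid test $P$ recovers the bound. The bounds on $\Gfd(L)$ and $\Gid(N)$ follow from parallel arguments, using $-\ldt_R E$ and $\RHom_R(E,-)$ respectively with $E$ ranging over modules of finite injective dimension. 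The ``two of three finite implies the third finite'' assertion drops out of the same long-exact-sequence analysis: eventual vanishing of the relevant Ext or Tor groups for two of $L, M, N$ propagates to the third.

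The hardest part is not any individual step of the long-exact-sequence chase but rather verifying that the vanishing characterizations of Gorenstein dimensions are available in the claimed generality --- arbitrary bounded complexes over an arbitrary noetherian ring, with no Gorenstein hypothesis on $R$. At the module level this is due to Holm, and the complex-level extension is carried out in \cite{DCMCA}; once those characterizations are in hand, each of the three inequalities and the finiteness assertion reduces to the dimension-shifting argument sketched above.
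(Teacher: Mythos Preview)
The paper's own proof is simply a citation to Propositions 9.1.16, 9.2.15, and 9.3.25 in \cite{DCMCA}, with the remark that the Gorenstein flat case needs the noetherian hypothesis. Your proposal is more substantive: you sketch the long-exact-sequence argument driven by the Ext/Tor vanishing characterizations of the Gorenstein dimensions, and you correctly identify that the real content lies in those characterizations, for which you also point to \cite{DCMCA}. So the two approaches ultimately converge on the same source, but yours actually explains the mechanism.

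One caveat worth flagging: the vanishing criterion you invoke---e.g., $\Gpd(X)\le n$ iff $\Ext^i(X,P)=0$ for all $i>n$ and all $P$ with $\pd(P)<\infty$---is, in the standard treatments (going back to Holm for modules), only stated as a biconditional under the a~priori assumption that $\Gpd(X)<\infty$. Consequently the two-of-three finiteness does not quite ``drop out'' of the Ext/Tor chase on its own: knowing that the relevant Ext or Tor groups eventually vanish for the third complex does not by itself force its Gorenstein dimension to be finite. That step is usually established first by direct manipulation of resolutions, using that Gorenstein projective (resp.\ injective, flat) modules form a resolving (resp.\ coresolving, resolving) class; only then does the vanishing criterion pin down the exact value and yield the stated inequalities. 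The cited propositions in \cite{DCMCA} handle both pieces, so your deferral there is ultimately adequate, but the sketch as written elides this point.
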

\begin{proof}
Propositions 9.1.16, 9.2.15, and 9.3.25 in \cite{DCMCA} provide the proofs. Note that 9.3.25 requires that $R$ be (right) Noetherian which is covered by our blanket assumption on $R$.
\end{proof}

\section{Level Inequalities}

We are almost ready to construct bounds for the homological levels and Gorenstein homological levels of any bounded $R$-complex. We need only the following standard homological constructions, and a result bounding the homological and Gorenstein homological dimensions of complexes.

\begin{note}
Let $M$ be a complex with zero differential. We set $M^\oplus = \bigoplus\limits_{i \in \mathbb{Z}} M_i$. Note that $\level{\Cl}(M) = \level{\Cl}(M^\oplus)$ for any collection $\Cl$ in $\sD(R)$, but that
\begin{equation*}
\Cld(M^\oplus) = \sup_{i \in \mathbb{Z}}\Cld(M_i)
\end{equation*}
where $\Cld$ denotes any homological or Gorenstein homological dimension. (Note that the above equality only holds for injective, Gorenstein injective, or Gorenstein flat dimensions if $R$ is Noetherian.)
\end{note}
\begin{note} \label{Gacc}
For any $R$-complex $M$ and for all $i \in \mathbb{Z}$, we have
\begin{gather} 
	\label{acc1}
	0 \longrightarrow \HH_i(M) \longrightarrow \C_i(M) \longrightarrow \B_{i-1}(M) \longrightarrow 0 \\
	\label{acc2}
	0 \longrightarrow \B_i(M) \longrightarrow \Z_i(M) \longrightarrow \HH_i(M) \longrightarrow 0 \\
	\label{acc3}
	0 \longrightarrow \B_i(M) \longrightarrow M_i \longrightarrow \C_i(M) \longrightarrow 0 \\
	\label{acc4}
	0 \longrightarrow \Z_i(M) \longrightarrow M_i \longrightarrow \B_{i-1}(M) \longrightarrow 0	
\end{gather}
\end{note}

We now prove generalizations of Lemma 3.6 and Proposition 3.7 in \cite{GPerf}, and extend them to all bounded complexes. It is important to note that the proofs given in \cite{GPerf} naturally translate to complexes with non-finitely generated homology and to all standard homological and Gorenstein homological dimensions with some adjustments.

\begin{lemma}
Let $M$ be an right-bounded $R$-complex (i.e., $M_i = 0$ for all $i \ll 0$). If $\HH_i(M)$ and $M_i$ have finite projective, flat, Gorenstein projective, or Gorenstein flat dimension for all $i \in \mathbb{Z}$, then $\B_i(M)$, $\Z_i(M)$, and $\C_i(M)$ all have correspondingly finite homological or Gorenstein homological dimension.
\end{lemma}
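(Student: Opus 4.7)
The plan is induction on $i$, starting from the bottom of $M$. Since $M$ is right-bounded, there is an integer $N$ with $M_i = 0$ for all $i < N$; for such $i$ we automatically have $\B_i(M) = \Z_i(M) = \C_i(M) = 0$, so the inductive hypothesis holds trivially below $N$.

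For the inductive step I would feed the three short exact sequences \eqref{acc2}, \eqref{acc3}, \eqref{acc4} from Note~\ref{Gacc} into the ``two out of three'' property for finite $\Cl$-dim supplied by Lemma~\ref{DimBound} (for projective or flat dimension) and Lemma~\ref{GdimBound} (for Gorenstein projective or Gorenstein flat dimension). Assume inductively that $\B_{i-1}(M)$ has finite $\Cl$-dim. Applying the appropriate lemma to \eqref{acc4} gives $\Z_i(M)$ finite $\Cl$-dim, since both $M_i$ (by hypothesis) and $\B_{i-1}(M)$ (by induction) do. Applying it to \eqref{acc2} then gives $\B_i(M)$ finite $\Cl$-dim, using $\Z_i(M)$ just produced together with $\HH_i(M)$ from the hypothesis. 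Finally, applying it to \eqref{acc3} gives $\C_i(M)$ finite $\Cl$-dim from $\B_i(M)$ and $M_i$. This advances the induction on $\B_i(M)$ and simultaneously settles $\Z_i(M)$ and $\C_i(M)$ at the same index.

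Because every step goes through only the two-out-of-three principle for finite homological dimension, the argument is uniform across the four dimensions in question; no case-by-case analysis is needed beyond choosing Lemma~\ref{DimBound} or Lemma~\ref{GdimBound}. I do not foresee a real obstacle. The only points that deserve a little care are: (i) running the induction along the single thread $\{\B_i(M)\}$ and recovering $\Z_i(M)$ and $\C_i(M)$ as byproducts of the same inductive pass rather than inducting on three quantities separately, and (ii) noting that right-boundedness of $M$ is precisely what makes this bottom-up induction well-founded, whereas a dual argument using boundedness on the other side together with the injective analogue of Lemma~\ref{DimBound} would be needed for the injective and Gorenstein injective variants (excluded from the statement here).
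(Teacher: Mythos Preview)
Your proposal is correct and matches the paper's own proof essentially step for step: both start from the vanishing below the graded infimum and climb upward using the two-out-of-three property applied to \eqref{acc4}, then \eqref{acc2}, then \eqref{acc3}, with Lemmas~\ref{DimBound} and \ref{GdimBound} supplying that property. Your remark that the injective and Gorenstein injective cases require the dual, top-down induction is also exactly what the paper does in the companion lemma.
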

\begin{proof}
For the rest of this proof, let ``dimension'' refer to any of the four possible homological and Gorenstein homological dimensions. Let $u = \inf(M)$. We have $\HH_{u}(M) \cong \C_u(M)$, and so by \ref{acc3} and Lemmas~\ref{DimBound} and \ref{GdimBound}, we have that $\B_u(M)$ and $\Z_u(M)$ have finite dimension. By \ref{acc4}, we have that $\Z_{u+1}(M)$ has finite dimension, and thus we have that $\B_{u+1}(M)$ has finite dimension by \ref{acc2}. In addition, we have $\C_{u+1}(M)$ has finite dimension by \ref{acc3}. Continuing this process iteratively, we have that $\C_i(M)$, $\B_i(M)$, and $\Z_i(M)$ have finite dimension for all $i \in \mathbb{Z}$.
\end{proof}
\begin{lemma}
Let $M$ be a left-bounded $R$-complex (i.e., $M_i = 0$ for all $i \gg 0$). If $\HH_i(M)$ and $M_i$ have finite injective or Gorenstein injective dimension for all $i \in \mathbb{Z}$, then $\B_i(M)$, $\Z_i(M)$, and $\C_i(M)$ all have finite injective or Gorenstein injective dimension, respectively.
\end{lemma}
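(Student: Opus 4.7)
The plan is to dualize the proof of the previous lemma: the injective and Gorenstein injective dimension bounds in Lemmas~\ref{DimBound} and \ref{GdimBound} run in the opposite direction to the projective/flat/Gorenstein versions, so the induction should descend from the top rather than ascend from the bottom. Since $M$ is left-bounded (and nontrivial, else the claim is vacuous), let $v$ denote the largest integer with $M_v \neq 0$. At this index, $\B_v(M) = 0$, so sequence~\ref{acc2} collapses to $\Z_v(M) \cong \HH_v(M)$ and sequence~\ref{acc3} collapses to $\C_v(M) \cong M_v$; both therefore have finite (Gorenstein) injective dimension by hypothesis. Indices $i > v$ are trivial since all modules in sight vanish.

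For the inductive step, suppose $\B_i(M)$, $\Z_i(M)$, and $\C_i(M)$ all have finite (Gorenstein) injective dimension for some $i \leq v$. Then sequence~\ref{acc4} at index $i$, namely
\begin{equation*}
0 \longrightarrow \Z_i(M) \longrightarrow M_i \longrightarrow \B_{i-1}(M) \longrightarrow 0,
\end{equation*}
together with the two-out-of-three principle in Lemmas~\ref{DimBound} and \ref{GdimBound}, forces $\B_{i-1}(M)$ to have finite (Gorenstein) injective dimension. Feeding this into sequence~\ref{acc2} at index $i-1$ with the hypothesis on $\HH_{i-1}(M)$ gives the same for $\Z_{i-1}(M)$, and feeding it into sequence~\ref{acc3} at index $i-1$ with the hypothesis on $M_{i-1}$ gives the same for $\C_{i-1}(M)$. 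Iterating reaches every $i \in \mathbb{Z}$, since each specific index sits at finite distance below $v$.

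I do not anticipate a real obstacle here: the two-out-of-three conclusion supplied by Lemmas~\ref{DimBound} and \ref{GdimBound} for injective and Gorenstein injective dimensions is exactly what the argument needs, and the four short exact sequences of Note~\ref{Gacc} are structurally symmetric enough that simply reversing the direction of induction carries the preceding proof over verbatim.
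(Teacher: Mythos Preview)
Your proof is correct and follows essentially the same approach as the paper: start at the top degree where $\B_v(M)=0$, read off $\Z_v(M)\cong\HH_v(M)$ and $\C_v(M)\cong M_v$, then descend one step at a time using \ref{acc4}, \ref{acc2}, and \ref{acc3} together with the two-out-of-three property from Lemmas~\ref{DimBound} and \ref{GdimBound}. Your choice to name $v$ explicitly as the largest index with $M_v\neq 0$ is arguably more careful than the paper's use of $\sup(M)$, given the earlier convention that $\sup$ denotes the homological supremum, but the argument is otherwise identical.
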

\begin{proof}
For the rest of this proof, let ``dimension'' refer to either injective or Gorenstein injective dimension. Let $s = \sup(M)$. We have $\HH_s(M) \cong \Z_s(M)$, $\B_s(M) = 0$, and $M_s \cong \C_s(M)$. By \ref{acc4} and Lemmas~\ref{DimBound} and \ref{GdimBound}, we have that $\B_{s-1}(M)$ has finite dimension. By Lemmas~\ref{DimBound} and \ref{GdimBound} applied to \ref{acc2} and \ref{acc3}, we have that $\C_{s-1}(M)$ and $\Z_{s-1}(M)$ have finite dimension. Continuing this process iteratively, we have that $\C_i(M)$, $\B_i(M)$ and $\Z_i(M)$ have finite dimension for all $i \in \mathbb{Z}$.
\end{proof} 

\begin{lemma}\label{FiniteDim}
Let $M \in \sD(R)$, and suppose that $\HH(M)^\oplus$ has finite homological or Gorenstein homological dimension. Then $M$ has correspondingly finite homological or Gorenstein homological dimension.
\end{lemma}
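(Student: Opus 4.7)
The plan is to construct a bounded resolution of $M$ by modules of the appropriate type by combining an unbounded resolution with the two preceding lemmas and a soft-truncation argument. I will use finiteness of $\sup M$ and $\inf M$ (automatic in the relevant applications where $M \in \Db(R)$) to reduce to controlling a single syzygy, then splice in a finite resolution of that syzygy.

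For the cases in which $\Cld$ is projective, flat, Gorenstein projective, or Gorenstein flat dimension, I will pick a right-bounded projective resolution $P \xrightarrow{\simeq} M$; since projective modules have dimension zero in all four senses, $P$ is simultaneously a resolution in each class. The hypothesis gives that each $\HH_i(M) \cong \HH_i(P)$, being a direct summand of $\HH(M)^\oplus$, has finite dimension of the relevant type, and the preceding lemma on right-bounded complexes then yields finite dimension for each $\Z_i(P)$, $\B_i(P)$, and $\C_i(P)$. I will next choose $n \geq \sup M$ and form the soft truncation $\tau_{\leq n} P$, whose entries are $P_i$ for $i < n$ and $\C_n(P)$ in degree $n$, with zero above and differentials inherited from $P$; it is bounded on both sides and quasi-isomorphic to $M$ because the vanishing of $\HH_i(P)$ for $i > n$ makes the canonical map $P \twoheadrightarrow \tau_{\leq n} P$ a quasi-isomorphism. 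Since $\C_n(P)$ has finite dimension of the relevant type, it admits a finite resolution by modules of that class, which can be spliced into degree $n$ of $\tau_{\leq n} P$ to produce the desired bounded resolution of $M$.

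The injective and Gorenstein injective cases will be dual. I take a left-bounded injective resolution $M \xrightarrow{\simeq} I$, apply the preceding lemma on left-bounded complexes to conclude that $\Z_i(I)$, $\B_i(I)$, and $\C_i(I)$ have finite (Gorenstein) injective dimension, and then for $n \leq \inf M$ form the dual soft truncation $\tau_{\geq n} I$, whose entries are $\Z_n(I)$ in degree $n$ and the $I_i$ for $i > n$ above, with zero below. Splicing in a finite (Gorenstein) injective coresolution of $\Z_n(I)$ yields a bounded resolution of $M$ by (Gorenstein) injective modules. The main obstacle is purely technical bookkeeping: verifying that the soft truncations are quasi-isomorphisms and that the spliced resolutions are genuine complexes with the expected homology. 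Once those details are in place, the argument rests entirely on the two preceding lemmas together with the existence of finite resolutions of finite-dimensional modules.
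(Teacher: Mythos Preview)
Your proposal is correct and takes essentially the same approach as the paper: both use a semi-free (respectively semi-injective) resolution, invoke the preceding lemma to conclude that the relevant $\C_i$ (respectively $\Z_i$) have finite dimension, and then use boundedness of homology to finish. The paper compresses your soft-truncation-and-splice step into the single sentence ``Since the homology of $M$ vanishes for large indexes, we then have that $M$ has finite dimension,'' whereas you spell out this conclusion explicitly; your observation that one tacitly needs $M \in \Db(R)$ (which holds in all applications) is also accurate.
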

\begin{proof}
First consider the case when $\HH(M)^\oplus$ has finite projective, flat, Gorenstein projective, or Gorenstein flat dimension (referred to as dimension for the rest of the paragraph). Let $F$ be a semi-free resolution of $M$. By the previous lemma, we have that $\C_i(F)$ has finite dimension for all $i \in \mathbb{Z}$. Since the homology of $M$ vanishes for large indexes, we then have that $M$ has finite dimension.

Next consider the case where $M$ has finite injective or Gorenstein flat dimension (referred to as dimension for the rest of the paragraph). Let $I$ be a semi-injective resolution of $M$. By the previous lemma, we have that $\Z_i(F)$ has finite dimension for all $i \in \mathbb{Z}$. Since the homology of $M$ vanishes for small indexes, we then have that $M$ has finite dimension.
\end{proof}
We now arrive at the main results of the paper. We note that the following result providing bounds on projective level was established in a general triangulated category containing enough projectives by Christensen in \cite[Prop. 4.7]{IdealTriang}. However, the method of proof below is different and applies specifically to the derived category of a ring.  

\begin{theorem}\label{PInequality}
Let $M \in \Db(R)$, and let $\Proj$ be the collection of all projective $R$-modules. The following inequality holds:
\begin{equation*}
	 \level\Proj M \leq \pd_R\HH(M)^\oplus + 1
\end{equation*}
\end{theorem}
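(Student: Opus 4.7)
The plan is to induct on $p := \pd_R \HH(M)^\oplus$, which I may assume finite since otherwise the bound is vacuous. The projective Adams resolution developed earlier is tailored to exactly this induction: each application strips one unit of projective dimension from the homology, while by Lemma~\ref{SyzygyInequality} it costs at most one in projective level.

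In the base case $p = 0$, the goal is to show directly that $M \in \thick^1(\Proj)$. Since each $\HH_i(M)$ is projective, the sequence \eqref{acc2} splits in every degree, giving sections $s_i \colon \HH_i(M) \to \Z_i(M) \subseteq M_i$ of the quotient $\Z_i(M) \twoheadrightarrow \HH_i(M)$. Because $s_i$ lands in cycles, these collectively define a chain map from the zero-differential complex $\bigoplus_i \Sigma^i \HH_i(M)$ to $M$, and the induced map on $\HH_i$ is the identity by construction. This is therefore a quasi-isomorphism; since $M$ is homologically bounded only finitely many summands are nonzero, so $M$ is isomorphic in $\sD(R)$ to a finite direct sum of shifts of projective modules, which sits in $\thick^1(\Proj)$.

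For the inductive step $p \geq 1$, I would apply one step of the projective Adams resolution to obtain the triangle $\Omega^1(M) \to F^0 \to M \to$ together with the short exact sequence $0 \to \HH_i(\Omega^1(M)) \to F^0_i \to \HH_i(M) \to 0$ in each degree. Because $F^0_i$ is free, Lemma~\ref{DimBound} yields $\pd_R \HH_i(\Omega^1(M)) \leq p - 1$ for every $i$, hence $\pd_R \HH(\Omega^1(M))^\oplus \leq p - 1$. The induction hypothesis applied to $\Omega^1(M) \in \Db(R)$ then gives $\level{\Proj}(\Omega^1(M)) \leq p$, and Lemma~\ref{SyzygyInequality} closes the argument via $\level{\Proj}(M) \leq \level{\Proj}(\Omega^1(M)) + 1 \leq p + 1$.

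The main obstacle is the base case: one cannot invoke a black-box structural theorem to deduce $M \simeq \HH(M)$ in $\sD(R)$ from projective homology alone, and the splitting of \eqref{acc2} is precisely the concrete ingredient that realizes the desired quasi-isomorphism as an honest chain map. Once this is in hand, the inductive step is essentially formal given the Adams-resolution lemmas already established.
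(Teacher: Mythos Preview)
Your proposal is correct and follows essentially the same approach as the paper's proof: induction on $p=\pd_R\HH(M)^\oplus$, with the base case handled by splitting the surjection $\Z_i(M)\twoheadrightarrow\HH_i(M)$ to realize a quasi-isomorphism $\HH(M)\to M$, and the inductive step handled by one layer of the projective Adams resolution together with Lemmas~\ref{SyzygyInequality} and~\ref{Splice}. The only cosmetic difference is that the paper phrases the base-case splitting as a single map $\HH(M)^\oplus\to\Z(M)^\oplus$ of the total modules rather than degree-by-degree, and in the inductive step asserts $\pd_R\HH(\Omega^1(M))^\oplus=p-1$ rather than your (equally sufficient) $\leq p-1$.
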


\begin{proof}
(This proof is essentially a reformulation of \cite[Proposition 5.5]{HomPerf}. In that proof, the authors assumed that the homology of $M$ be finitely generated, but the argument also works for complexes with non-finitely generated homologies.)

We may assume that $M$ is nonzero or else the statement is trivial. If the projective dimension of $\HH(M)^\oplus$ is infinite, there is nothing to prove, and so we assume that the projective dimension of $\HH(M)^\oplus$ is finite. We prove this theorem by induction on the projective dimension of $\HH(M)^\oplus$.

For the base case, we assume $\pd\HH(M)^\oplus = 0$. Thus, $\HH(M)^\oplus$ is a projective module. The canonical surjection of $R$-modules $\Z(M)^\oplus \to \HH(M)^\oplus$ then splits to form a map $\HH(M)^\oplus \to \Z(M)^\oplus$. This splitting map can be considered as a map of complexes $\sigma: \HH(M) \to \Z(M)$ since both $\HH(M)$ and $\Z(M)$ have zero differential. Composing $\sigma$ with the inclusion of complexes $\Z(M) \hookrightarrow M$ yields a quasi-isomorphism $\HH(M) \simeq M$. Thus, $\level{\Proj}(\HH(M)) = \level{\Proj}(M)$. $\HH(M)$ is a bounded complexes of projective modules with zero differential, and so $\level{\Proj}(\HH(M)) = 1$. Thus, $\level{\Proj}(M) = 1$ which establishes the base case.

For the inductive case, suppose that $\pd(\HH(M)^\oplus) = p$ for some positive integer $p$. Assume that for all complexes $L \in \Db(R)$ such that $\pd(\HH(L)^\oplus) < p$, we have that $\level{\Proj}(L) \leq \pd(\HH(L)^\oplus) + 1$. Consider the complex $S = \Omega_R^{1}(M)$. By Lemma~\ref{Splice}, we have that $\pd(\HH(S)^\oplus) = p-1$, or else the projective dimension of $\HH(M)^\oplus$ would be strictly less than $p$. By the inductive hypothesis, we have that $\level{\Proj}(S) \leq \pd(\HH(S)^\oplus) + 1 = p$. By Lemma~\ref{SyzygyInequality}, we have that
\begin{align*}
	\level{\Proj}(M) 	&\leq \level{\Proj}(S) + 1 \\
						&\leq p + 1 \\
						&\leq \pd(\HH(M)^\oplus) + 1	
\end{align*}
which completes the inductive case.
\end{proof}

We can form a injective version of the above proposition using a similar proof. 

\begin{theorem}\label{IInequality}
Let $M \in \Db(R)$, and let $\Inj$ be the collection of all injective $R$-modules. The following inequality holds:	
\begin{equation*}
	\level\Inj M \leq \id\HH(M)^\oplus + 1
\end{equation*}
\end{theorem}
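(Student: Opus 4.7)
The plan is to mirror the proof of Theorem~\ref{PInequality}, replacing the projective Adams resolution $\Omega$ with the injective one $\Theta$ from Section 2.2 and dualizing the base-case construction. We may assume $M \neq 0$ and $p := \id\HH(M)^\oplus < \infty$, and we induct on $p$.

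For the base case $p = 0$, each $\HH_i(M)$ is injective as a direct summand of the injective module $\HH(M)^\oplus$, so the inclusion $\HH_i(M) \hookrightarrow \C_i(M)$ coming from sequence \ref{acc1} splits, giving a retraction $\pi_i : \C_i(M) \twoheadrightarrow \HH_i(M)$. Composing with the canonical projection $M_i \twoheadrightarrow \C_i(M)$ produces module maps $f_i : M_i \to \HH_i(M)$. Since $\partial_i(M_i) \subseteq \B_{i-1}(M)$ lies in the kernel of $M_{i-1} \twoheadrightarrow \C_{i-1}(M)$, the family $\{f_i\}$ assembles into a chain map $f : M \to \HH(M)$, with $\HH(M)$ viewed as a complex with zero differential. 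A direct check on cycles shows that $\HH(f)$ is the identity, so $f$ is a quasi-isomorphism, and hence $\level{\Inj}(M) = \level{\Inj}(\HH(M)) = 1$, since $\HH(M)$ is a bounded complex of injective modules with zero differential.

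For the inductive step, assume $p \geq 1$ and set $T = \Theta^1(M)$. Applying Lemma~\ref{Splice} with $n = 1$ and taking $\oplus$ yields a short exact sequence of modules
\begin{equation*}
0 \longrightarrow \HH(M)^\oplus \longrightarrow (I^0)^\oplus \longrightarrow \HH(T)^\oplus \longrightarrow 0,
\end{equation*}
in which $(I^0)^\oplus$ is a direct sum of injective modules, hence injective because $R$ is noetherian. Lemma~\ref{DimBound} then forces $\id\HH(T)^\oplus \leq p - 1$, so the inductive hypothesis gives $\level{\Inj}(T) \leq p$. The injective part of Lemma~\ref{SyzygyInequality} now delivers $\level{\Inj}(M) \leq \level{\Inj}(T) + 1 \leq p + 1 = \id\HH(M)^\oplus + 1$, completing the induction.

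The main obstacle is the base case. In the projective setting one splits the surjection $\Z(M)^\oplus \twoheadrightarrow \HH(M)^\oplus$ and lifts to a quasi-isomorphism $\HH(M) \to M$; here one must instead split an injection in the opposite direction, produce a chain map $M \to \HH(M)$, and verify that the resulting maps respect the differential and induce the identity on homology. Once this dualization is handled, the inductive step transcribes almost verbatim from the proof of Theorem~\ref{PInequality}, with $\Omega$ replaced by $\Theta$ and the projective-dimension inequality of Lemma~\ref{DimBound} replaced by its injective-dimension counterpart.
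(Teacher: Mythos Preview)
Your proof is correct and follows essentially the same approach as the paper's own argument: split the inclusion $\HH(M)\hookrightarrow \C(M)$ degreewise in the base case to produce a quasi-isomorphism $M\to\HH(M)$, and for the inductive step apply the injective Adams resolution $\Theta^1$ together with Lemmas~\ref{Splice}, \ref{DimBound}, and \ref{SyzygyInequality}. One small remark: since $I^0$ is a \emph{bounded} complex of injectives, $(I^0)^\oplus$ is a finite direct sum and hence injective without invoking the noetherian hypothesis.
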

\begin{proof}
We may assume that $M$ is nonzero or else the statement is trivial. If the injective dimension of $\HH(M)^\oplus$ is infinite, there is nothing to prove, and so we assume that the injective dimension of $\HH(M)^\oplus$ is finite. We prove this theorem by induction on the injective dimension of $\HH(M)^\oplus$.

For the base case, we assume $\id\HH(M)^\oplus = 0$. Thus, $\HH(M)^\oplus$ is an injective module. The canonical injection of $R$-modules $\HH(M)^\oplus \hookrightarrow \C(M)^\oplus$ then splits to form a map $\C(M)^\oplus \to \HH(M)^\oplus$. This splitting map can be considered as a map of complexes $\sigma: \C(M) \to \HH(M)$ since both $\HH(M)$ and $\C(M)$ have zero differential. Composing $\sigma$ with the morphism $M \twoheadrightarrow \C(M)$ yields a quasi-isomorphism $M \to \HH(M)$. Thus, $\level{\Inj}(\HH(M)) = \level{\Inj}(M)$. $\HH(M)$ is a bounded complexes of injective modules with zero differential, and so $\level{\Inj}(\HH(M)) = 1$. Thus, $\level{\Inj}(M) = 1$ which establishes the base case.

For the inductive case, suppose that $\id(\HH(M)^\oplus) = i$ for some positive integer $i$. Assume that for all complexes $L \in \Db(R)$ such that $\id(\HH(L)^\oplus) < i$, we have that $\level{\Inj}(L) \leq \id(\HH(L)^\oplus) + 1$. Consider the complex $T = \Theta^{1}(M)$. By Lemma~\ref{Splice}, we have that $\id(\HH(T)^\oplus) = i-1$, or else the injective dimension of $\HH(M)^\oplus$ would be strictly less than $i$. By the inductive hypothesis, we have that $\level{\Inj}(T) \leq \id(\HH(T)^\oplus) + 1 = i$. By Lemma~\ref{SyzygyInequality}, we have that
\begin{align*}
\level{\Inj}(M) 	&\leq \level{\Inj}(T) + 1 \\
					&\leq i + 1 \\
					&\leq \id(\HH(M)^\oplus) + 1	
\end{align*}
which completes the inductive case.
\end{proof}

In order to obtain the bounds in the previous two results, the existence of a splitting map involving $\HH(M)^\oplus$ was crucial. Such a splitting map does not generally occur when dealing with flat modules, or with Gorenstein projective, injective, or flat modules. Thus, we can only prove a slightly weaker bound in these cases. Because the proofs with respect to flat dimension, Gorenstein projective dimension, and Gorenstein flat dimension are so similar, we combine their proofs in a single theorem.

\begin{theorem} \label{CInequality}
Let $M \in \Db(R)$, and let $\Cl$ be the class of flat, Gorenstein projective, or Gorenstein flat modules. Then the following inequality holds:
\begin{equation*}
	\level{\Cl}(M) \leq \max\{2, \Cld(\HH(M)^\oplus) + 1\}
\end{equation*}
where $\Cld$ represents flat dimension, Gorenstein projective dimension, or Gorenstein flat dimension, respectively.
\end{theorem}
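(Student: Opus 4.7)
The plan is to induct on $d := \Cld(\HH(M)^\oplus)$, which we may assume is finite (otherwise the inequality is vacuous, and by Lemma~\ref{FiniteDim} this hypothesis also forces $\Cld(M) < \infty$).

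For the inductive step with $d \geq 2$, I would mirror the argument of Theorem~\ref{PInequality}: form the first stage of a projective Adams resolution $\Omega^1(M) \to F^0 \to M \to$, extract from Lemma~\ref{Splice} the short exact sequence
\begin{equation*}
0 \longrightarrow \HH(\Omega^1(M))^\oplus \longrightarrow F^0_\oplus \longrightarrow \HH(M)^\oplus \longrightarrow 0,
\end{equation*}
and apply Lemma~\ref{DimBound} (in the flat case) or Lemma~\ref{GdimBound} (in the Gorenstein cases) to obtain $\Cld(\HH(\Omega^1(M))^\oplus) \leq d - 1$. Since $d - 1 \geq 1$, the inductive hypothesis yields $\level{\Cl}(\Omega^1(M)) \leq \max\{2, d\} = d$, and Lemma~\ref{SyzygyInequality} delivers $\level{\Cl}(M) \leq d + 1 = \max\{2, d + 1\}$.

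The genuine content of the theorem is the base case $d \leq 1$, where I must establish $\level{\Cl}(M) \leq 2$. The splitting trick used in Theorem~\ref{PInequality}, which produces a quasi-isomorphism $\HH(M) \simeq M$ via a section of the surjection $\Z(M)^\oplus \to \HH(M)^\oplus$, is unavailable here: flat, Gorenstein projective, and Gorenstein flat modules do not generally split off such extensions, and this is precisely what forces the floor of $2$ in the stated bound. Instead, I would replace $M$ by a semi-free resolution $P$ and use the right-bounded structural lemma immediately preceding Lemma~\ref{FiniteDim} in a sharpened form: starting from $\Z_u(P) \cong P_u \in \Cl$ at the bottom and proceeding upward by induction on $i$, Lemma~\ref{DimBound} (or \ref{GdimBound}) applied successively to (\ref{acc4}) and (\ref{acc2}) forces both $\Z_i(P)$ and $\B_i(P)$ to lie in $\Cl$ itself, not merely have finite $\Cl$-dimension, precisely because $\Cld(\HH_i(P)) \leq 1$ keeps every appeal to the quantity $\pd(N) - 1$ in those inequalities nonpositive.

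Once $\Z_i(P), \B_i(P) \in \Cl$ has been verified, both the zero-differential subcomplex $\Z(P) \simeq \bigoplus_i \Sigma^i \Z_i(P)$ and the zero-differential quotient $P / \Z(P) \simeq \bigoplus_i \Sigma^i \B_{i-1}(P)$ lie in $\thick^1(\Cl)$, so the distinguished triangle $\Z(P) \to P \to P/\Z(P) \to$ yields $\level{\Cl}(P) \leq 2$ via Lemma~\ref{BasicLemma}(3). The main obstacle is executing this sharpened induction uniformly across the three classes $\Cl \in \{\Flat, \GP, \GF\}$; the key point is that each class is closed under kernels of surjections between its own objects, and this closure is exactly the module-level shadow of the dimension inequalities in Lemmas~\ref{DimBound} and~\ref{GdimBound}, so a single argument built on those lemmas should handle all three cases simultaneously.
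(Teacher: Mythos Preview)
Your overall strategy---reduce by Adams syzygies to the case $\Cld(\HH(M)^\oplus)\leq 1$, then exhibit the distinguished triangle $\Z(P)\to P\to P/\Z(P)\to$ with both outer terms in $\thick^1(\Cl)$---is exactly what the paper does, and your inductive step for $d\geq 2$ is correct and slightly cleaner than the paper's one-shot jump to $\Omega^{n-1}(M)$.

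There is, however, a genuine gap in your base case. You replace $M$ by a semi-free resolution $P$, but a semi-free resolution of $M\in\Db(R)$ is only bounded below; when $\pd_R(M)=\infty$ (which is the typical situation, e.g.\ $M=k$ over a non-regular Gorenstein local ring with $\Cl=\GP$), the complex $P$ is unbounded above. Your upward induction does correctly show $\Z_i(P),\B_i(P)\in\Cl$ for every $i$, but the conclusion that the zero-differential complexes $\Z(P)$ and $P/\Z(P)$ lie in $\thick^1(\Cl)$ then fails: $\thick^1(\Cl)$ is built from \emph{finite} direct sums of shifts of objects in $\Cl$, so any object in it has homology concentrated in finitely many degrees, whereas $\Z(P)$ has nonzero homology in infinitely many degrees.

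The paper closes this gap by invoking Lemma~\ref{FiniteDim} at the outset: since $\Cld(\HH(M)^\oplus)<\infty$ forces $\Cld(M)<\infty$, one may replace $M$ by a \emph{bounded} complex $C$ of $\Cl$-modules (not merely free modules), and then run exactly your $\Z/\B$ induction inside $C$. Your argument becomes correct once you make this same replacement; alternatively, after establishing $\B_i(P)\in\Cl$ for all $i$, you could soft-truncate $P$ above $\sup(M)$, noting the new top term $\C_N(P)\cong\B_{N-1}(P)$ lies in $\Cl$, and proceed with the bounded truncation.
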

\begin{proof}
We may assume that $M$ is nonzero or else the statement is trivial. If the $\Cl$-dimension of $\HH(M)^\oplus$ is infinite, there is nothing to prove, and so we assume that the $\Cl$-dimension of $\HH(M)^\oplus$ is finite. Since the $\Cl$-dimension of $\HH(M)^\oplus$ is finite, the $\Cl$-dimension of $M$ is finite by Lemma~\ref{FiniteDim}. Thus, $M$ is isomorphic in $\sD(R)$ to a bounded complex of $\Cl$-modules $C$ with $C_i = 0$ for $i < \inf(M)$ or $i > \Cld(M)$. For the rest of this proof, we replace $M$ with $C$.

We now split the proof into two cases: one where the $\Cl$-dimension of $\HH(M)^\oplus$ is zero, and one where the $\Cl$-dimension of $\HH(M)^\oplus$ is positive.

For the first case, we assume $\Cld(\HH(M)^\oplus) = 0$. Thus, each non-zero homology module of $M$ is a $\Cl$-module. Let $u = \inf(M)$. Since $M_{u-1} = 0$, we have that $\B_{u-1}(M) = 0$, and so $\HH_u(M) \cong \C_u(M)$. Consider the resulting short exact sequence obtained from \ref{acc3}, replacing $\C_u(M)$ with $\HH_u(M)$.
\begin{equation*}
	0 \longrightarrow \B_u(M) \longrightarrow M_u \longrightarrow \HH_u(M) \longrightarrow 0
\end{equation*}
Since both $M_u$ and $\HH_u(M)$ are $\Cl$-modules, we have that $\B_u(M)$ is a $\Cl$-module by Lemmas~\ref{DimBound} or \ref{GdimBound}. (Keep in mind that the $\C$-dimension of a module must be non-negative.) Now by using \ref{acc2}, we have the short exact sequence
\begin{equation*}
	0 \longrightarrow \B_u(M) \longrightarrow \Z_u(M) \longrightarrow \HH_u(M) \longrightarrow 0
\end{equation*}
which forces $\Z_u(M)$ to be a $\Cl$-module, again by Lemmas~\ref{DimBound} or \ref{GdimBound}. Thus, we have that $\Z_u(M)$ and $\B_u(M)$ are $\Cl$-modules.

Now consider the short exact sequence obtained from \ref{acc4}
\begin{equation*}
	0 \longrightarrow \Z_{u+1}(M) \longrightarrow M_{u+1} \longrightarrow \B_u(M) \longrightarrow 0
\end{equation*}
Since $\B_u(M)$ has been shown to be a $\Cl$-module and $M_{u+1}$ is a $\Cl$-module by assumption, another application of Lemmas~\ref{DimBound} or \ref{GdimBound} yields that $\Z_{u+1}(M)$ is a $\Cl$-module. This implies that $\B_{u+1}(M)$ is a $\Cl$-module by \ref{acc2} and Lemmas~\ref{DimBound} or \ref{GdimBound}. Thus, $\B_{u+1}(M)$ and $\Z_{u+1}(M)$ are $\Cl$-modules. Repeating this procedure for all integers greater than $u$ yields that $\B(M)^\oplus$ and $\Z(M)^\oplus$ are $\Cl$-modules. Since $\B(M)$ and $\Z(M)$ are complexes with zero differential, we then have that $\level{\Cl}(\B(M)) = \level{\Cl}(\Z(M)) = 1$. We have the distinguished triangle in $\sD(R)$.
\begin{equation*}
	\Z(M) \longrightarrow M \longrightarrow \Sigma\B(M) \longrightarrow
\end{equation*}
which implies that $\level{\Cl}(M) \leq 2$ by Lemma~\ref{BasicLemma}. This proves the first case.
		
For the second case, suppose that $\Cld_R(\HH(M)^\oplus) = n$ for some positive integer $n$. Consider the complex $S = \Omega_R^{n-1}(M)$.  By the construction of the Adams resolution and Lemmas~\ref{DimBound} or \ref{GdimBound}, we have that $S$ has finite $\Cl$-dimension, and so can be replaced by a bounded complex of $\Cl$-modules in a similar manner as in the beginning of the proof. By Lemma~\ref{Splice}, we have that $\Cld_R(\HH(S)^\oplus) = 1$, or else the $\Cl$-dimension of $\HH(M)^\oplus$ would be strictly less than $n$. Let $u = \inf(S)$. Since $S_{u-1} = 0$, we have that $\B_{u-1}(S) = 0$, and so $\HH_u(S) \cong \C_u(S)$. Consider the resulting short exact sequence obtained from \ref{acc3}, replacing $\C_u(S)$ with $\HH_u(S)$.
\begin{equation*}
	0 \longrightarrow \B_u(S) \longrightarrow S_u \longrightarrow \HH_u(S) \longrightarrow 0
\end{equation*}
Since $\Cld(S_u) = 0$ and $\Cld(\HH_u(S))$ is zero or one, we have that $\B_u(S)$ is a $\Cl$-module by Lemmas~\ref{DimBound} or \ref{GdimBound}. Since $S_{u-1} = 0$, we have $S_u \cong Z_u$ and so $\Z_u(S)$ is a $\Cl$-module. Thus, we have that $\Z_u(S)$ and $\B_u(S)$ are $\Cl$-modules.

Now consider the short exact sequence obtained from \ref{acc4}
\begin{equation*}
	0 \longrightarrow \Z_{u+1}(S) \longrightarrow S_{u+1} \longrightarrow \B_u(S) \longrightarrow 0
\end{equation*}
Since $\B_u(S)$ has been shown to be a $\Cl$-module and $S_{u+1}$ is a $\Cl$-module by assumption, another application of Lemmas~\ref{DimBound} or \ref{GdimBound} yields that $\Z_{u+1}(S)$ is a $\Cl$-module. This implies that $\B_{u+1}(S)$ is a $\Cl$-module by \ref{acc2} and Lemmas~\ref{DimBound} or \ref{GdimBound}. Thus, $\B_{u+1}(S)$ and $\Z_{u+1}(S)$ are $\Cl$-modules. Repeating this procedure for all integers greater than $u$ yields that $\B(S)^\oplus$ and $\Z(S)^\oplus$ are $\Cl$-modules. Since $\B(S)$ and $\Z(S)$ are complexes with zero differential, we then have that $\level{\Cl}(\B(S)) = \level{\Cl}(\Z(S)) = 1$. We have the distinguished triangle in $\sD(R)$.
\begin{equation*}
	\Z(S) \longrightarrow S \longrightarrow \Sigma\B(S) \longrightarrow
\end{equation*}
which implies that $\level{\Cl}(S) \leq 2$ by Lemma~\ref{BasicLemma}. We then have
\begin{align*}
	\level{\Cl}(M) 	&\leq \level{\Cl}(S) + (n-1) \\
					&\leq 2 + (n-1) \\
					&\leq \Cld(\HH(M)^\oplus) + 1		
\end{align*}
where the first inequality holds by Lemma~\ref{SyzygyInequality}. This completes the second case.
\end{proof}

Finally, we bound the level of a bounded complex with respect to the class of Gorenstein injective modules.

\begin{theorem} \label{GIInequality}
Let $M \in \Db(R)$, and let $\GI$ be the collection of Gorenstein injective modules. Then the following inequality holds:
\begin{equation*}
	\level{\GI}(M) \leq \max\{2, \Gid(\HH(M)^\oplus) + 1\}
\end{equation*}
\end{theorem}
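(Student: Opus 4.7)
The plan is to mirror the strategy of Theorem~\ref{CInequality}, replacing the projective Adams resolution $\Omega^n$ with its injective counterpart $\Theta^n$ and iterating through a bounded Gorenstein injective complex representing $M$ from the top downward rather than from the bottom upward. After reducing to the case $\Gid(\HH(M)^\oplus) < \infty$, Lemma~\ref{FiniteDim} gives $\Gid(M) < \infty$, so $M$ is isomorphic in $\sD(R)$ to a bounded complex of Gorenstein injective modules; I would replace $M$ by this complex and split into two cases according to whether $\Gid(\HH(M)^\oplus)$ is zero or positive.

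For the case $\Gid(\HH(M)^\oplus) = 0$, let $s$ denote the top degree of the replacement complex, so that $\B_s(M) = 0$, $\C_s(M) = M_s$ is Gorenstein injective, and $\Z_s(M) \cong \HH_s(M)$ is Gorenstein injective by hypothesis. Iterating downward from $i = s$ and applying Lemma~\ref{GdimBound} in the sharp form $\Gid(N) \leq \max(\Gid(L) - 1, \Gid(M))$ to the sequences \ref{acc4}, \ref{acc3}, and \ref{acc2} in turn pushes Gorenstein injectivity from $\Z_i, M_i$ down to $\B_{i-1}, \C_{i-1}, \Z_{i-1}$. The short exact sequence $0 \to \B(M) \to M \to \C(M) \to 0$ of complexes with zero differential then yields a distinguished triangle to which Lemma~\ref{BasicLemma} applies, giving $\level{\GI}(M) \leq 2$.

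For the case $\Gid(\HH(M)^\oplus) = n$ with $n \geq 1$, set $T = \Theta^{n-1}(M)$. By Lemma~\ref{Splice}, $\Gid(\HH(T)^\oplus) \leq 1$, and Lemma~\ref{FiniteDim} lets me replace $T$ with a bounded complex of Gorenstein injective modules. Although the homologies now only satisfy $\Gid(\HH_i(T)) \leq 1$, the downward iteration above still forces every $\B_i(T)$ and $\C_i(T)$ to be honestly Gorenstein injective: in the bound $\Gid(\B_{i-1}) \leq \max(\Gid(\Z_i) - 1, \Gid(T_i))$ coming from \ref{acc4}, the extra unit of dimension in $\Z_i$ inherited from $\HH_i$ is absorbed by the $-1$, and the analogous use of \ref{acc3} handles $\C_{i-1}$. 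The cycles $\Z_i(T)$ themselves need not be Gorenstein injective. The triangle $\B(T) \to T \to \C(T) \to$ therefore gives $\level{\GI}(T) \leq 2$ by Lemma~\ref{BasicLemma}, and Lemma~\ref{SyzygyInequality} then yields $\level{\GI}(M) \leq \level{\GI}(T) + (n-1) \leq n + 1$.

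The main obstacle, and the key point of departure from Theorem~\ref{CInequality}, is the choice of distinguished triangle. In the projective, flat, Gorenstein projective, and Gorenstein flat settings, Lemma~\ref{GdimBound} provides its sharpest bound on the leftmost term $L$ of a short exact sequence, so both cycles and boundaries survive the induction and the triangle $\Z(M) \to M \to \Sigma\B(M)$ is available. In the Gorenstein injective case the sharp bound is instead on the rightmost term $N$, so cycles typically pick up one extra unit of dimension from the homology and need not become Gorenstein injective; one must therefore work with boundaries and coimages, using the triangle $\B(M) \to M \to \C(M)$ instead.
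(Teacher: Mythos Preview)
Your proposal is correct and follows essentially the same route as the paper: reduce via Lemma~\ref{FiniteDim} to a bounded complex of Gorenstein injectives, in the positive case pass to $T=\Theta^{n-1}(M)$ via Lemma~\ref{Splice}, iterate downward from the top degree to show the $\B_i$ and $\C_i$ are Gorenstein injective, and conclude from the triangle $\B\to M\to \C\to$ together with Lemma~\ref{SyzygyInequality}. The only cosmetic difference is that the paper runs the descent through the sequences \ref{acc1} and \ref{acc3} (tracking only $\B_i$ and $\C_i$), whereas you use \ref{acc4}, \ref{acc3}, \ref{acc2} and carry the auxiliary bound $\Gid(\Z_i)\le 1$; note that your appeal to \ref{acc2} for $\Z_{i-1}$ actually needs the middle-term estimate $\Gid(\Z_{i-1})\le\max(\Gid(\B_{i-1}),\Gid(\HH_{i-1}))$ rather than the ``sharp form'' you cite, but this is a standard companion to Lemma~\ref{GdimBound}.
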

\begin{proof}
We may assume that $M$ is nonzero or else the statement is trivial. If the Gorenstein injective dimension of $\HH(M)^\oplus$ is infinite, there is nothing to prove, and so we assume that the Gorenstein injective dimension of $\HH(M)^\oplus$ is finite. Since the Gorenstein injective dimension of $\HH(M)^\oplus$ is finite, the Gorenstein injective dimension of $M$ is finite by Lemma~\ref{FiniteDim}. Thus, $M$ is isomorphic in $\sD(R)$ to a bounded complex of Gorenstein injective modules $I$ with $I_i = 0$ for $i < -\Gid(M)$ or $i > \sup(M)$. For the rest of this proof, we replace $M$ with $I$.

We now split the proof into two cases: one where the Gorenstein injective dimension of $\HH(M)^\oplus$ is zero, and one where the Gorenstein injective dimension of $\HH(M)^\oplus$ is positive.

For the first case, we assume $\Gid(\HH(M)^\oplus) = 0$. Thus, each non-zero homology module of $M$ is a Gorenstein injective module. Let $s = \sup(M)$. Since $M_{s+1} = 0$, we have that $\B_{s}(M) = 0$ and $M_s \cong \C_s(M)$. This forces both $\B_s(M)$ and $\C_s(M)$ to be Gorenstein injective. Consider the resulting short exact sequence obtained from \ref{acc1}
\begin{equation*}
	0 \longrightarrow \HH_s(M) \longrightarrow \C_s(M) \longrightarrow \B_{s-1}(M) \longrightarrow 0
\end{equation*}
Since both $\C_s(M)$ and $\HH_s(M)$ are Gorenstein injective, we have that $\B_{s-1}(M)$ is Gorenstein injective by Lemma~\ref{GdimBound}. (Keep in mind that the Gorenstein injective dimension of a module must be nonnegative.) Now by using \ref{acc3}, we have the short exact sequence
\begin{equation*}
	0 \longrightarrow \B_{s-1}(M) \longrightarrow M_{s-1} \longrightarrow \C_{s-1}(M) \longrightarrow 0
\end{equation*}
which forces $\C_{s-1}(M)$ to be a Gorenstein injective module, again by Lemma~\ref{GdimBound}. Thus, we have that $\C_{s-1}(M)$ and $\B_{s-1}(M)$ are Gorenstein injective modules.

Through repeated applications of Lemma~\ref{GdimBound} with \ref{acc1} and \ref{acc3}, we have that $\C_i(M)$ and $\B_i(M)$ are Gorenstein injective modules for all $i \in \mathbb{Z}$. Since $\B(M)$ and $\C(M)$ are complexes with zero differential, we then have that $\level{\GI}(\B(M)) = \level{\GI}(\C(M)) = 1$. We have the distinguished triangle in $\sD(R)$.
\begin{equation*}
	\B(M) \longrightarrow M \longrightarrow \C(M) \longrightarrow
\end{equation*}
which implies that $\level{\GI}(M) \leq 2$ by Lemma~\ref{BasicLemma}. This proves the first case.
		
For the second case, suppose that $\Gid_R(\HH(M)^\oplus) = n$ for some positive integer $n$. Consider the complex $T = \Theta_R^{n-1}(M)$.  By the construction of the injective Adams resolution and Lemma~\ref{GdimBound}, we have that $T$ has finite Gorenstein injective dimension, and so can be replaced by a bounded complex of Gorenstein injective modules in a similar manner as in the beginning of the proof. By Lemma~\ref{Splice}, we have that $\Gid_R(\HH(T)^\oplus) = 1$, or else the Gorenstein injective dimension of $\HH(M)^\oplus$ would be strictly less than $n$. Let $s = \sup(T)$. Since $T_{s+1} = 0$, we have that $\B_{s}(T) = 0$, and $T_s \cong \C_s(T)$. This implies that $\B_{s}(T)$ and $\C_{s}(T)$ are Gorenstein injective. Consider the resulting short exact sequence obtained from \ref{acc1}
\begin{equation*}
	0 \longrightarrow \HH_s(T) \longrightarrow \C_s(T) \longrightarrow \B_{s-1}(T) \longrightarrow 0
\end{equation*}
Since $\Gid(\C_s(T)) = 0$ and $\Gid(\HH_s(T))$ is zero or one, we have that $\B_{s-1}(T)$ is a Gorenstein injective module by Lemma~\ref{GdimBound}. Now by using \ref{acc3}, we have the short exact sequence
\begin{equation*}
	0 \longrightarrow \B_{s-1}(T) \longrightarrow T_{s-1} \longrightarrow \C_{s-1}(T) \longrightarrow 0
\end{equation*}
which forces $\B_{s-1}(T)$ to be a Gorenstein injective module, again by Lemma~\ref{GdimBound}. Thus, we have that $\C_{s-1}(T)$, and $\B_{s-1}(T)$ are Gorenstein injective modules.

Through repeated applications of Lemma~\ref{GdimBound} with \ref{acc1} and \ref{acc3}, we have that $\C_i(T)$ and $\B_i(T)$ are Gorenstein injective modules for all $i \in \mathbb{Z}$. Since $\B(T)$ and $\C(T)$ are complexes with zero differential, we then have that $\level{\GI}(\B(T)) = \level{\GI}(\C(T)) = 1$. We have the distinguished triangle in $\sD(R)$.
Since $\B(T)$ and $\Z(T)$ are complexes with zero differential, we then have that $\level{\GI}(\B(T)) = \level{\GI}(\Z(T)) = 1$. We have the distinguished triangle in $\sD(R)$.
\begin{equation*}
	\Z(T) \longrightarrow T \longrightarrow \C(T) \longrightarrow
\end{equation*}
which implies that $\level{\GI}(T) \leq 2$ by Lemma~\ref{BasicLemma}. We then have
\begin{align*}
	\level{\GI}(M) 	&\leq \level{\GI}(T) + (n-1) \\
					&\leq 2 + (n-1) \\
					&\leq \Gid(\HH(M)^\oplus) + 1		
\end{align*}
where the first inequality holds by Lemma~\ref{SyzygyInequality}. This completes the second case.
\end{proof}

We now have the following level inequalities for all bounded complexes over regular and Gorenstein local rings.

\begin{corollary}\label{Regular}
Let $(R, \mx, k)$ be a noetherian regular local ring of dimension $d$. The following inequalities hold.
\begin{enumerate}
\item $\sup\{\level{\Proj}(M)\, | \, M \in \Db(R)\} \leq d+1$
\item $\sup\{\level{\Inj}(M)\, | \, M \in \Db(R)\} \leq d+1$
\item $\sup\{\level{\Flat}(M)\, | \, M \in \Db(R)\} \leq d+1$ 
\end{enumerate}
\end{corollary}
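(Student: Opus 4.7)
The plan is to apply Theorems~\ref{PInequality}, \ref{IInequality}, and \ref{CInequality} term-by-term, using the classical fact that over a regular local ring of Krull dimension $d$, every $R$-module has projective dimension, injective dimension, and flat dimension bounded above by $d$. Combined with the identity $\Cld(\HH(M)^\oplus) = \sup_{i \in \mathbb{Z}} \Cld(\HH_i(M))$ for the relevant dimensions (valid over noetherian rings, as recorded in the first note of Section~3), this reduces each of (1)--(3) to a short computation.

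For part (1), I would note that for any $M \in \Db(R)$ the bound $\pd_R \HH_i(M) \leq d$ for every $i$ yields $\pd_R \HH(M)^\oplus \leq d$, and Theorem~\ref{PInequality} then gives $\level{\Proj}(M) \leq d+1$. Part (2) is handled identically with $\id$ in place of $\pd$ and Theorem~\ref{IInequality} in place of Theorem~\ref{PInequality}, using that injective dimensions of modules are also bounded by $d$ over a regular local ring.

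For part (3), Theorem~\ref{CInequality} applied with $\Cl = \Flat$ gives
\[
\level{\Flat}(M) \leq \max\{2,\, \fd_R \HH(M)^\oplus + 1\} \leq \max\{2, d+1\},
\]
which equals $d+1$ as soon as $d \geq 1$. The only delicate point, and the closest thing to an obstacle in the argument, is the edge case $d = 0$, where $R$ is a field and Theorem~\ref{CInequality} only delivers the weaker bound $2$. I would close this gap by invoking the inclusion $\Proj \subseteq \Flat$, which immediately forces $\level{\Flat}(M) \leq \level{\Proj}(M)$, so part (1) yields $\level{\Flat}(M) \leq 1 = d+1$ in the remaining case.
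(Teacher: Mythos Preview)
Your proof is correct and follows the same approach as the paper: apply Theorems~\ref{PInequality}, \ref{IInequality}, and \ref{CInequality} together with the global bound $d$ on homological dimensions over a regular local ring, and handle the edge case $d=0$ for flat level separately. The only cosmetic difference is that for $d=0$ the paper argues directly that every complex over a field is quasi-isomorphic to its homology, whereas you deduce $\level{\Flat}(M)\leq\level{\Proj}(M)$ from $\Proj\subseteq\Flat$ and invoke part~(1); both are one-line observations.
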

\begin{proof}
The first two inequalities immediately follow from Theorem~\ref{PInequality} and Theorem~\ref{IInequality}. If the dimension of $R$ is positive, then the third inequality immediately follows from Theorem~\ref{CInequality}. If $d = 0$, then $R$ is a field, and any bounded complex over a field has a $\Flat$-level of at most 1 since every complex over a field is quasi-isomorphic to its homology \cite[Cor. 4.2.18]{DCMCA}.
\end{proof}

\begin{corollary}\label{Gorenstein}
Let $(R, \mx, k)$ be a noetherian Gorenstein local ring of dimension $d$. The following inequalities hold.
\begin{enumerate}
\item $\sup\{\level{\GP}(M)\, | \, M \in \Db(R)\} \leq \max\{2, d+1\}$
\item $\sup\{\level{\GI}(M)\, | \, M \in \Db(R)\} \leq \max\{2, d+1\}$
\item $\sup\{\level{\GF}(M)\, | \, M \in \Db(R)\} \leq \max\{2, d+1\}$
\end{enumerate}
\end{corollary}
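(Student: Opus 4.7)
The plan is to combine Theorem~\ref{CInequality} (applied with $\Cl$ the class of Gorenstein projective or Gorenstein flat modules) and Theorem~\ref{GIInequality} with the classical characterization of Gorenstein local rings by finiteness of Gorenstein homological dimensions. For any $M \in \Db(R)$ and any $\Cl \in \{\GP, \GI, \GF\}$, those theorems already give
\begin{equation*}
\level{\Cl}(M) \leq \max\{2, \Cld(\HH(M)^\oplus) + 1\},
\end{equation*}
so the whole task reduces to bounding the right-hand side by $\max\{2, d+1\}$ whenever $R$ is Gorenstein local of dimension $d$.

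First I would record that, because $R$ is noetherian, the note preceding Lemma~\ref{FiniteDim} applies to each of the three Gorenstein dimensions under consideration, giving $\Cld(\HH(M)^\oplus) = \sup_{i \in \mathbb{Z}}\Cld(\HH_i(M))$. Thus it suffices to bound $\Cld(N)$ by $d$ for an arbitrary (not necessarily finitely generated) $R$-module $N$. For this I would invoke the standard fact that over a commutative noetherian Gorenstein local ring of Krull dimension $d$, every $R$-module has Gorenstein projective, Gorenstein injective, and Gorenstein flat dimension at most $d$; this is recorded, for instance, in Chapter~6 of \cite{DCMCA}.

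Combining these observations yields $\Cld(\HH(M)^\oplus) \leq d$ and hence $\level{\Cl}(M) \leq \max\{2, d+1\}$; taking the supremum over $M \in \Db(R)$ produces all three inequalities simultaneously. The only real obstacle is pinning down the correct reference for the bound on Gorenstein dimensions over a Gorenstein ring; once that is in hand, the corollary is a one-step consequence of Theorem~A (i.e., Theorems~\ref{CInequality} and~\ref{GIInequality}), exactly parallel to how Corollary~\ref{Regular} is deduced from Theorems~\ref{PInequality} and~\ref{IInequality}.
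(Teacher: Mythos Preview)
Your proposal is correct and matches the paper's approach: the paper's proof is the single line ``Immediate from Theorems~\ref{CInequality} and \ref{GIInequality},'' and you have simply made explicit the implicit step that over a Gorenstein local ring of dimension $d$ every module has Gorenstein projective, injective, and flat dimension at most $d$. The only minor quibble is the reference location in \cite{DCMCA}, but the fact itself is standard and exactly what the paper is silently invoking.
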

\begin{proof}
Immediate from Theorems~\ref{CInequality} and \ref{GIInequality}.
\end{proof}

\section{The Bass Formula for Injective and Gorenstein Injective Levels}

As an application of the previous theorems, we can prove the Bass Formula for injective level, and a version of the Bass Formula for Gorenstein injective level. To prove these results, we also need injective and Gorenstein injective versions of \cite[Thm. 2.1]{LowerBound}, which have recently been established in \cite{LB2}.

\begin{theorem}\label{Bass}
Let $R$ be a noetherian local ring, and let $M \in \Dbf(R)$ such that $\id(\HH(M)^\oplus)) < \infty$. We have that
\begin{equation*}
\level{\Inj}(M) = \depth(R)+1
\end{equation*}
\end{theorem}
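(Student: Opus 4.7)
The plan is to prove the stated equality by establishing the two inequalities separately. The upper bound $\level{\Inj}(M) \leq \depth(R) + 1$ will follow directly from Theorem~\ref{IInequality} together with the classical Bass formula, while the lower bound $\level{\Inj}(M) \geq \depth(R) + 1$ will be supplied by the injective analogue of \cite[Thm. 2.1]{LowerBound} from \cite{LB2}, as flagged in the paragraph preceding the statement.

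For the upper bound, I would first use the hypothesis $M \in \Dbf(R)$ to observe that each $\HH_i(M)$ is a finitely generated $R$-module and that only finitely many of them are nonzero. Since $R$ is noetherian, the note preceding Lemma~\ref{FiniteDim} gives
\[
	\id\bigl(\HH(M)^\oplus\bigr) = \sup_{i \in \mathbb{Z}} \id\bigl(\HH_i(M)\bigr),
\]
so the finiteness assumption $\id(\HH(M)^\oplus) < \infty$ makes each nonzero $\HH_i(M)$ a finitely generated $R$-module of finite injective dimension. The classical Bass formula then forces $\id(\HH_i(M)) = \depth(R)$ for every such $i$. Assuming $M \not\simeq 0$ (the statement is otherwise vacuous since then $\HH(M)^\oplus = 0$ has $\id = -\infty$), this yields $\id(\HH(M)^\oplus) = \depth(R)$, and Theorem~\ref{IInequality} closes this half of the argument.

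For the lower bound, I would invoke directly the injective version of \cite[Thm. 2.1]{LowerBound} recorded in \cite{LB2}, which is exactly tailored to produce $\level{\Inj}(M) \geq \depth(R) + 1$ for any $M \in \Dbf(R)$ with nonzero homology of finite injective dimension. Combining the two inequalities gives the stated equality.

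The main obstacle is not in the upper-bound reduction (which is routine once the Bass formula and Theorem~\ref{IInequality} are in hand) but in matching the hypotheses of the cited lower-bound result from \cite{LB2} with the setup of the present theorem. In spirit, that result proceeds by applying the Coghost Lemma to a minimal injective resolution, using a nonvanishing class in $\Ext^{\depth R}_R(k, \HH_*(M))$ coming from the Bass formula; I expect the only care needed is to check that this nonvanishing statement holds for $M$ in the bounded derived category with finitely generated homology, which is precisely the generality treated in \cite{LB2}.
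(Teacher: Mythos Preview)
Your upper bound is exactly the paper's argument: Theorem~\ref{IInequality} plus the classical Bass formula for finitely generated modules gives $\level{\Inj}(M)\leq\depth(R)+1$.

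The lower bound, however, is not quite packaged the way you describe. The result the paper cites, \cite[Cor.~4.3]{LB2}, does not output $\depth(R)+1$ directly; it is a gap-in-homology bound yielding
\[
\level{\Inj}(M)\;\geq\;\id_R(M)+\inf(M)+1,
\]
where $\id_R(M)$ is the injective dimension of the \emph{complex} $M$, not of $\HH(M)^\oplus$. Two extra steps are then needed. First, one must know $\id_R(M)<\infty$; the paper gets this from Lemma~\ref{FiniteDim}. Second, one must identify $\id_R(M)+\inf(M)$ with $\depth(R)$, and that is the Bass formula for complexes \cite[Cor.~16.4.11]{DCMCA}, not the module version you invoked for the upper bound. (The paper also separates off the degenerate case $\id_R(M)=-\inf(M)$, where the cited corollary forces $\depth(R)=0$ and the inequality is trivial.) Your speculated mechanism via a nonvanishing class in $\Ext^{\depth R}_R(k,\HH_*(M))$ is not how \cite{LB2} is actually applied here; the argument runs through the semi-injective resolution of $M$ and the gap between $-\id_R(M)$ and $\inf(M)$, not through the homology modules.

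So the outline is right, but the lower-bound half needs the intermediate quantity $\id_R(M)+\inf(M)$ and the complex-level Bass formula rather than a single black-box citation.
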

\begin{proof}
By Theorem~\ref{IInequality}, we have that $\level{\Inj}(M) \leq \id(\HH(M)^\oplus) + 1$. Since $\HH(M)^\oplus$ is a finitely generated module of finite injective dimension, the Bass Formula for modules implies that $\id(\HH(M)^\oplus) = \depth(R)$. Thus,
\begin{equation*}
\level{\Inj}(M) \leq \depth(R) + 1
\end{equation*}

For the opposite inequality, first note that by Lemma~\ref{FiniteDim}, we have that $\id(M) < \infty$. Suppose that $\id(M) = -\inf(M)$. Then by \cite[Cor. 16.4.11]{DCMCA}, we have that $\depth(R) = 0$. We always have that $\level{\Inj}(M) \geq 1$ (since the zero complex has infinite injective dimension), and so $\level{\Inj}(M) = \depth(R) + 1$.

Otherwise, we have that for all $-\id(M) < i < \inf(M)$, $\HH_i(M) = 0$, and that $\HH_0(\Upsilon^{-\id(M)+1}(M))$ cannot be injective or else the injective dimension of $M$ would be strictly smaller. Thus, we have
\begin{equation*}
\level{\Inj}(M) \geq \inf(M) + \id(M) + 1
\end{equation*}
by \cite[Cor. 4.3]{LB2}. But by \cite[Cor. 16.4.11]{DCMCA}, we have that $\inf(M) + \id(M) = \depth(R)$, and so
\begin{equation*}
\level{\Inj}(M) \geq \depth(R) + 1
\end{equation*}
Thus, we have
\begin{equation*}
\level{\Inj}(M) = \depth(R) + 1
\end{equation*}
\end{proof}

It is natural to wonder if the assumption $\id(\HH(M)^\oplus) < \infty$ in the above theorem can be weakened to be that $\level{\Inj}(M) < \infty$. This cannot be done owing to a counterexample very similar to Example 3.10 in \cite{LevelGor}. Consider the ring $R = k[x]/(x^2)$ (where $k$ is any field), and let $K$ be the Koszul complex on the element $x$. Since $R$ is injective, we have that $K$ is a bounded semi-injective complex, and thus has finite injective level. If the injective level of $K$ is 1, then there would exist a quasi-isomorphism from $K$ to $\HH(K)$. But since $K_0 = R$ is cyclic, any module homomorphism $K_0 \to \HH_0(K)$ is multiplication by some element $t \in \HH_0(K)$, which induces the zero map on homology. Thus, the injective level of $K$ is (at least) two, which is larger than $\depth(R) + 1 = 1$. It is important to note in this example that $\id(\HH_0(K)) = \infty$ since $R$ is not regular.

\begin{theorem}\label{GIBass}
Let $R$ be a noetherian local ring of positive depth, and let $M \in \Dbf(R)$ such that $\Gid(\HH(M)^\oplus) < \infty$. We have that
\begin{equation*}
\level{\GI}(M) = \depth(R) + 1
\end{equation*}
\end{theorem}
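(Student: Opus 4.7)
The plan is to mirror the structure of the proof of Theorem \ref{Bass}, splitting into an upper bound argument that uses Theorem \ref{GIInequality} together with the Bass formula for Gorenstein injective dimension of modules, and a lower bound argument that uses a Gorenstein injective analog of \cite[Cor. 4.3]{LB2}. The positive depth hypothesis is what lets us bypass the boundary case that complicates things in the Gorenstein setting.

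For the upper bound, I would apply Theorem \ref{GIInequality} to obtain
\begin{equation*}
\level{\GI}(M) \leq \max\{2,\, \Gid(\HH(M)^{\oplus}) + 1\}.
\end{equation*}
Since $\HH(M)^{\oplus}$ is a finitely generated module of finite Gorenstein injective dimension over the local ring $R$, the Bass formula for Gorenstein injective dimension of modules (Christensen--Frankild--Holm) gives $\Gid(\HH(M)^{\oplus}) = \depth(R)$. Because $\depth(R) \geq 1$ by hypothesis, we have $\max\{2, \depth(R)+1\} = \depth(R)+1$, which yields $\level{\GI}(M) \leq \depth(R)+1$.

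For the lower bound, I would first invoke Lemma~\ref{FiniteDim} to get $\Gid(M) < \infty$, and then apply the complex-level Bass formula for Gorenstein injective dimension (i.e., the analog of \cite[Cor.~16.4.11]{DCMCA} for $\Gid$) to obtain $\inf(M) + \Gid(M) = \depth(R)$. The positive depth hypothesis then forces $\Gid(M) > -\inf(M)$, so $\HH_{i}(M) = 0$ for $-\Gid(M) < i < \inf(M)$ with a nontrivial index range, and furthermore $\HH_{0}(\Upsilon^{-\Gid(M)+1}(M))$ cannot be Gorenstein injective since that would contradict the minimality of $\Gid(M)$. Applying the Gorenstein injective coghost-type lower bound from \cite{LB2} (the Gorenstein analog of \cite[Cor.~4.3]{LB2}) then gives
\begin{equation*}
\level{\GI}(M) \geq \inf(M) + \Gid(M) + 1 = \depth(R) + 1,
\end{equation*}
which combined with the upper bound finishes the proof.

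The main obstacle is the lower bound, and specifically justifying that the hypotheses of the Gorenstein injective version of the result in \cite{LB2} are met. The positive depth assumption is what makes this go through cleanly: it guarantees a genuine gap in homology between $-\Gid(M)$ and $\inf(M)$, so we never land in the degenerate situation $\Gid(M) = -\inf(M)$ where the gap argument collapses (in contrast to Theorem~\ref{Bass}, where that boundary case could be salvaged by the trivial inequality $\level{\Inj}(M) \geq 1 = \depth(R)+1$; this workaround is unavailable here because $\level{\GI}(M) \geq 1$ does not improve to $\geq 2$ automatically when $\depth(R) = 0$). Once the gap is secured, the coghost-style argument in \cite{LB2} supplies the needed inequality in a manner formally parallel to the injective case.
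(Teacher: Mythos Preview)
Your proposal is correct and follows essentially the same approach as the paper: the upper bound via Theorem~\ref{GIInequality} together with the module-level Bass formula for $\Gid$, and the lower bound via Lemma~\ref{FiniteDim}, the complex-level Bass formula $\Gid(M)+\inf(M)=\depth(R)$, and \cite[Cor.~4.3]{LB2}. The only difference is cosmetic: the paper invokes \cite[Cor.~4.3]{LB2} directly to obtain $\level{\GI}(M)\geq \Gid(M)+\inf(M)+1$ without separately verifying the gap-in-homology hypothesis, whereas you spell out that verification (mirroring what the paper does in the injective case, Theorem~\ref{Bass}); note also that the paper cites the \emph{same} result \cite[Cor.~4.3]{LB2} for both the injective and Gorenstein injective lower bounds, so there is no separate ``Gorenstein analog'' to locate.
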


\begin{proof}
By Theorem~\ref{GIInequality}, we have that $\level{\GI}(M) \leq \max\{2, \Gid(\HH(M)^\oplus) + 1\}$. Since $\HH(M)^\oplus$ is a finitely generated module of finite Gorenstein injective dimension, the Bass Formula for Gorenstein injective dimension (see \cite[Cor. 19.2.14]{DCMCA}) implies that $\Gid(\HH(M)^\oplus) = \depth(R)$. Since the depth of $R$ is positive, we have that
\begin{equation*}
\level{\GI}(M) \leq \depth(R) + 1
\end{equation*}

For the opposite inequality, we first note that by Lemma~\ref{FiniteDim}, we have that $\Gid(M) < \infty$. By \cite[Cor. 4.3]{LB2}, we have that
\begin{equation*}
\level{\GI} M \geq \Gid(M) + \inf(M) + 1
\end{equation*}
But by \cite[Theorem 19.2.40]{DCMCA}, we have that $\Gid(M) + \inf(M) = \depth(R)$, and so
\begin{equation*}
\level{\GI}(M) \geq \depth(R) + 1
\end{equation*}
Thus, we have
\begin{equation*}
\level{\GI} = \depth(R) + 1
\end{equation*}
\end{proof}
For an artinian ring, no such Bass Formula for Gorenstein injective dimension holds. As an example of this failure, consider the ring $R = k[x]/(x^2)$ (where $k$ is any field). Over $R$, the residue field $k$ is an Gorenstein injective module, as $R$ is artinian Gorenstein. Thus, $\level{\GI}(k) = 1$. Let $K$ be the Koszul complex over $x$. The homology modules of $K$ are Gorenstein injective since $R$ is an artinian Gorenstein ring. Thus, $\Gid(\HH(K)^\oplus) \leq 2$. If the Gorenstein injective level of $K$ is 1, then $K$ would be isomorphic to its homology in the derived category. But since $K$ is a bounded semi-injective complex, this would imply that $K$ is quasi-isomorphic to its homology. Since $K_0 = R$ is cyclic, any module homomorphism $K_0 \to \HH_0(K)$ is multiplication by some element $t \in \HH_0(K)$, which induces the zero map on homology. Thus, the Gorenstein injective level of $K$ is 2.

\section{Optimality of Upper Bounds for Levels}
The upper bounds obtained on all homological levels and Gorenstein homological levels considered in this paper are optimal. Using previously established results, it is fairly straightforward to see that the upper bounds obtained with respect to projective, injective, Gorenstein projective, and Gorenstein injective level are achieved.
\begin{itemize}
\item (Projective and Injective Level) By \cite[Cor. 2.2]{LowerBound}, we have that for a module $M$, $\level{\Proj}(M) = \pd(M) + 1$, and by a dual argument to \cite[Cor. 2.2]{LowerBound}, we have that for a module $M$, $\level{\Inj}(M) = \id(M) + 1$. Thus, by choosing a module of finite projective or injective dimension (for instance, the residue field of a regular local ring), the upper bound established is obtained.
\item (Gorenstein Projective Level) By \cite[Cor. 3.5]{LevelGor}, we have that for a module $\level{\GP}(M) = \Gpd(M)+1$. Thus, by choosing a module of finite nonzero Gorenstein projective dimension (for instance, the residue field of Gorenstein local ring of positive dimension), the upper bound established is obtained when considering complexes whose homologies have positive Gorenstein projective dimension. Example 3.10 in \cite{LevelGor} shows the necessity of allowing for a complex with Gorenstein projective homologies but with $\GP$-level two.
\item (Gorenstein Injective Level) By \cite[Cor. 4.3]{LB2}, we have that for a module $M$ $\level{\GI}(M) \geq \Gid(M) + 1$, and thus by an argument dual to \cite[Cor. 3.5]{LevelGor}, we have that $\level{\GI}(M) = \Gid(M) + 1$. Thus, by choosing a module of finite nonzero Gorenstein injective dimension (for instance, the residue field of a Gorenstein local ring of positive dimension), the upper bound established is achieved when considering complexes whose homologies have positive Gorenstein injective dimension. The example after Theorem~\ref{GIBass} shows the necessity of allowing for a complex with Gorenstein injective homologies with $\GI$-level two.
\end{itemize}

Establishing the optimality of the upper bounds for flat and Gorenstein flat dimension is more difficult but still possible. We first need some lemmas that provide lower bounds for the flat and Gorenstein flat levels of modules.

\begin{lemma}
Let $M$ be an $R$-module. Then $\level{\Flat}(M) \geq \fd(M)+1$.
\end{lemma}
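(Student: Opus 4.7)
The plan is to pass through the character dual and reduce the claim to the injective lower bound $\level{\Inj}(N) = \id_R(N)+1$ for modules $N$, recorded at the start of this section.

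Fix a faithfully injective $R$-module $\mathbb{E}$ (for instance $\bigoplus_{\mathfrak{m}} E(R/\mathfrak{m})$), and write $(-)^\vee := \Hom_R(-,\mathbb{E})$. Since $\mathbb{E}$ is injective, $(-)^\vee$ is an exact contravariant triangulated functor $\sD(R)^{\mathrm{op}} \to \sD(R)$ that preserves shifts (up to sign), finite direct sums, and direct summands, and that sends flat modules to injective modules by Lambek's criterion. The first step is to verify, by induction on $n$, that $(-)^\vee$ carries $\thick^n(\Flat)$ into $\thick^n(\Inj)$. The cases $n\leq 1$ are immediate. For the inductive step, if $X \in \thick^n(\Flat)$ is witnessed by a distinguished triangle $K \to L \oplus X \to N \to$ with $K \in \thick^1(\Flat)$ and $N \in \thick^{n-1}(\Flat)$, then applying $(-)^\vee$ yields a distinguished triangle
\begin{equation*}
N^\vee \longrightarrow L^\vee \oplus X^\vee \longrightarrow K^\vee \longrightarrow
\end{equation*}
in which $N^\vee \in \thick^{n-1}(\Inj)$ by induction and $K^\vee \in \thick^1(\Inj)$. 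Invoking the standard extension property of thickenings (an extension of $\thick^a$ by $\thick^b$ lands in $\thick^{a+b}$, symmetric in $a,b$ by a rotation/octahedral argument from the one-sided definition used in this paper), one concludes $L^\vee \oplus X^\vee \in \thick^n(\Inj)$, hence $X^\vee \in \thick^n(\Inj)$.

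Applied to the module $M$, this gives $\level{\Inj}(M^\vee) \leq \level{\Flat}(M)$. Since $M^\vee$ is again an $R$-module and $\mathbb{E}$ is faithfully injective, Lambek duality gives $\id_R(M^\vee) = \fd_R(M)$. Combining with the injective analogue of \cite[Cor. 2.2]{LowerBound} recalled just before the lemma,
\begin{equation*}
\level{\Flat}(M) \;\geq\; \level{\Inj}(M^\vee) \;=\; \id_R(M^\vee) + 1 \;=\; \fd_R(M) + 1,
\end{equation*}
which is the desired inequality.

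The main technical wrinkle is the thickening-symmetry step in the induction: the paper's definition of $\thick^n$ places $\thick^1$ on the left of the witnessing triangle and $\thick^{n-1}$ on the right, whereas $(-)^\vee$ produces these in the reversed order, so a brief rotation argument is required to put the dual triangle into standard form before concluding. Everything else is a routine application of Pontryagin/Lambek duality together with the already-established injective lower bound.
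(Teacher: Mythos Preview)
Your proof is correct and follows essentially the same route as the paper: pass to the character dual $\Hom_R(-,\mathbb{E})$, use that this sends flat modules to injectives to get $\level{\Inj}(M^\vee) \leq \level{\Flat}(M)$, and then invoke the module-level identity $\level{\Inj}(M^\vee) = \id(M^\vee)+1 = \fd(M)+1$. The only difference is cosmetic: where you reprove the functorial level inequality by an induction on thickenings (with the rotation/symmetry wrinkle you flag), the paper simply cites \cite[Lemma~2.4.6]{HomPerf} for the same conclusion.
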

\begin{proof}
By applying \cite[Lemma 2.4.6]{HomPerf} with respect to the functor $\Hom_R(-, \mathbb{E})$ we have the following inequality:
\begin{equation*}
\level{\Hom_R(\Flat, \mathbb{E})}(\Hom_R(M, \mathbb{E})) \leq \level{\Flat}(M)
\end{equation*}
However, for any flat module $F$ over $R$, we have that $\Hom_R(F, \mathbb{E})$ is injective by flat-injective duality. Thus, the set $\Hom_R(\Flat, \mathbb{E})$ is contained in the set $\Inj$. Thus, we have that
\begin{equation*}
\level{\Inj}(\Hom_R(M, \mathbb{E})) \leq \level{\Hom_R(F, \mathbb{E})}(\Hom_R(M, \mathbb{E}))
\end{equation*}
We have that $\level{\Inj}(\Hom_R(M, \mathbb{E})) = \id(\Hom_R(M, \mathbb{E}) + 1$ since $\Hom_R(M, \mathbb{E})$ is a module, and by flat-injective duality we have that $\fd(M) = \id(\Hom_R(M, \mathbb{E})$. Thus, we have the inequality
\begin{equation*}
\level{\Flat}(M) \geq \fd(M)+1
\end{equation*}
\end{proof}

\begin{lemma}
Let $M$ be an $R$-module. Then $\level{\GF}(M) \geq \Gfd(M)+1$.
\end{lemma}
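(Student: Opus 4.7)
The plan is to mirror the proof of the previous lemma, swapping flat-injective duality for its Gorenstein counterpart. If $\Gfd(M) = \infty$ then there is nothing to prove, so assume $\Gfd(M) < \infty$. Let $\mathbb{E}$ be a faithfully injective $R$-module. Applying \cite[Lemma 2.4.6]{HomPerf} to the contravariant functor $\Hom_R(-, \mathbb{E})$ yields
\begin{equation*}
\level{\Hom_R(\GF, \mathbb{E})}(\Hom_R(M, \mathbb{E})) \leq \level{\GF}(M).
\end{equation*}

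The crucial input is the classical fact that over a noetherian ring, the Pontryagin dual $\Hom_R(G, \mathbb{E})$ of any Gorenstein flat module $G$ is Gorenstein injective. This gives the containment $\Hom_R(\GF, \mathbb{E}) \subseteq \GI$, and hence
\begin{equation*}
\level{\GI}(\Hom_R(M, \mathbb{E})) \leq \level{\GF}(M).
\end{equation*}
Since $\Hom_R(M, \mathbb{E})$ is a module, the equality $\level{\GI}(N) = \Gid(N) + 1$ for modules $N$, noted in the optimality discussion above, gives $\level{\GI}(\Hom_R(M, \mathbb{E})) = \Gid(\Hom_R(M, \mathbb{E})) + 1$. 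Combined with the Gorenstein version of flat-injective duality, namely $\Gfd(M) = \Gid(\Hom_R(M, \mathbb{E}))$, this yields the desired inequality $\level{\GF}(M) \geq \Gfd(M) + 1$.

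The main obstacle is not computational but the precise identification and citation of Gorenstein flat-injective duality. One should verify that our blanket noetherian hypothesis is sufficient both for $\Hom_R(G, \mathbb{E})$ to be Gorenstein injective whenever $G$ is Gorenstein flat, and for the dimension equality $\Gid(\Hom_R(M, \mathbb{E})) = \Gfd(M)$ to hold; both are standard in the literature (e.g., in \cite{DCMCA}) but deserve explicit citation. A secondary subtlety is that the bound $\level{\GI}(N) = \Gid(N) + 1$ is being applied with $N = \Hom_R(M, \mathbb{E})$, and we need $\Gid(N)$ to be finite to extract a meaningful inequality; this is automatic from our reduction, since $\Gid(\Hom_R(M, \mathbb{E})) = \Gfd(M) < \infty$ by assumption.
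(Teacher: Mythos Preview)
Your proof is correct and essentially identical to the paper's: both apply \cite[Lemma~2.4.6]{HomPerf} to $\Hom_R(-,\mathbb{E})$, use Gorenstein flat--injective duality (\cite[Thm.~9.3.12, Prop.~9.3.20]{DCMCA}) to pass to $\GI$-level, and then invoke $\level{\GI}(N)=\Gid(N)+1$ for modules. One small slip: your opening reduction ``if $\Gfd(M)=\infty$ there is nothing to prove'' is backwards---in that case the claim asserts $\level{\GF}(M)=\infty$, which is not vacuous---so it is cleaner (as the paper does) to run the argument uniformly, since the inequality $\level{\GI}(N)\geq\Gid(N)+1$ already covers the infinite case.
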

\begin{proof}
By applying \cite[Lemma 2.4.6]{HomPerf} with respect to the functor $\Hom_R(-, \mathbb{E})$ we have the following inequality:
\begin{equation*}
\level{\Hom_R(\GF, \mathbb{E})}(\Hom_R(M, \mathbb{E})) \leq \level{\GF}(M)
\end{equation*}
However, for any Gorenstein flat module $F$ over $R$, we have that $\Hom_R(F, \mathbb{E})$ is Gorenstein injective by Gorenstein flat-Gorenstein injective duality \cite[Thm. 9.3.12]{DCMCA}. (Note that we assume that $R$ is noetherian.) Thus, the set $\Hom_R(\GF, \mathbb{E})$ is contained in the set $\GI$. Thus, we have that
\begin{equation*}
\level{\GI}(\Hom_R(M, \mathbb{E})) \leq \level{\Hom_R(F, \mathbb{E})}(\Hom_R(M, \mathbb{E}))
\end{equation*}
We have that $\level{\GI}(\Hom_R(M, \mathbb{E})) = \Gid(\Hom_R(M, \mathbb{E}) + 1$ since $\Hom_R(M, \mathbb{E})$ is a module, and by Gorenstein flat-Gorenstein injective duality we have that $\Gfd(M) = \Gid(\Hom_R(M, \mathbb{E})$ \cite[Prop. 9.3.20]{DCMCA}. Thus, we have the inequality
\begin{equation*}
\level{\GF}(M) \geq \Gfd(M)+1
\end{equation*}
\end{proof}

These two lemmas show that the upper bounds established in Theorem~\ref{CInequality} are optimal when considering complexes whose homologies have positive flat or Gorenstein flat dimension. In particular, the residue field of a regular local ring $R$ of positive dimension has flat and Gorenstein flat level of $\dim(R) + 1$.

It is possible, however, to construct a complex over a noetherian ring whose homologies are flat (and Gorenstein flat) but has flat level two (and Gorenstein flat level two), as the following example will show. This example was shared with us by L. Christensen, and uses Osofsky's construction of a flat module over a regular ring with projective dimension at least two.

\begin{example}
Let $R = \mathbb{R}[x, y, z]$, and let $Q$ be the field of fractions of $R$. By a result of Osofsky \cite{HomDim}, the projective dimension of $Q$ is at least two. Since $Q$ is the field of fractions of $R$, it is flat. Let $P$ be a free resolution of $Q$, and consider the hard truncation of $P$ at homological degree 1, denoted $P_{\leq 1}$. We have that $\HH_0(P_{\leq 1}) \cong Q$ and that $\HH_1(P_{\leq 1}) \cong \Omega_R^2(Q)$, and so the homology modules of $P_{\leq 1}$ are all flat (as all syzygies of flat modules are themselves flat). By Theorem~\ref{CInequality}, we have that $\level{\Flat}(P_{\leq 1}) \leq 2$. We claim that the flat level of $P_{\leq 1}$ is 2. Suppose for the sake of contradiction that $\level{\Flat}(P_{\leq 1}) = 1$. Then $P_{\leq 1}$ is isomorphic in the derived category to its homology. However, since $P_{\leq 1}$ is a bounded complex of projectives, we have that there exists a quasi-isomorphism between $P_{\leq 1}$ and its homology \cite[Prop. 6.4.20]{DCMCA}. This implies that there exists a chain map $\pi: P_{\leq 1} \to \HH(P_{\leq 1})$ inducing an isomorphism on homology:
\begin{center}
\begin{tikzcd}
0 \arrow[r] & P_1 \arrow[r]\arrow[d, "\pi_1"] & P_0 \arrow[r]\arrow[d, "\pi_0"] & 0 \\
0 \arrow[r] & \Omega^2_R(Q) \arrow[r, "0"] & Q \arrow[r] & 0
\end{tikzcd}
\end{center}
It is important to note that since the induced map $\Z(\pi)$ is a surjection, the map $\pi$ is itself a surjection (see for instance \cite[Lemma 4.2.7]{DCMCA}). Since $\pi$ is a quasi-isomorphism, we have that the mapping cone of $\pi$ is acyclic. Thus, there is an injection
\begin{equation*}
P_1 \hookrightarrow P_0 \oplus \Omega^2_R(Q)
\end{equation*}
where $p_1 \in P_1$ is sent to $(\partial(p_1), \pi_1(p_1)) \in P_0 \oplus \Omega_R^2(Q)$. Since $\pi_1$ is surjective, we have that $P_1 \cong \B_0(P_{\leq 1}) \oplus \Omega^2_R(Q)$. However, this makes $\Omega^2_R(Q)$ a direct summand of a projective module, and so $\Omega^2_R(Q)$ is projective. This forces the projective dimension of $Q$ to be at most one (see for instance \cite[Thm. 8.1.8]{DCMCA}). This is a contradiction, and so $\level{\Flat}(Q) = 2$. This example also shows the optimality of the upper bound for Gorenstein flat level, as over the regular ring  of finite Krull dimension $\mathbb{R}[x, y, z]$, every Gorenstein flat module is flat \cite[Cor. 9.3.22]{DCMCA}. This implies that the Gorenstein flat level is the same as the flat level of any complex. 
\end{example}

\subsection*{Acknowledgments}
The author would like to thank his thesis advisor, Tom Marley, for both suggesting the topic of this paper, and for the many insights and discussions that shaped this paper. The author would like to thank Lars Christensen and Andrew Soto Levins for inspiring the author to write the last two sections of the paper, and would like to thank Janina Letz for general comments on the paper and for pointing out that Theorem~\ref{PInequality} had establish in \cite{IdealTriang}. In addition, the author would like to thank Lars Christensen for suggesting Example 5.1, and Andrew Soto Levins for pointing out the existence of \cite{LB2} and for pointing out the Bass Formula for Gorenstein injective level.

\bibliographystyle{amsplain}
\bibliography{LevelInequalities.bib}

\end{document}